\documentclass{amsart}
\newtheorem{theorem}{Theorem}[section]
\newtheorem{lemma}[theorem]{Lemma}
\newtheorem{corollary}[theorem]{Corollary}
\newtheorem{proposition}[theorem]{Proposition}
\theoremstyle{definition}

\newenvironment{example}[1][Example]{\begin{trivlist}
\item[\hskip \labelsep {\bfseries #1}]}{\end{trivlist}}

\newenvironment{remark}[1][Remark]{\begin{trivlist}
\item[\hskip \labelsep {\bfseries #1}]}{\end{trivlist}}

\numberwithin{equation}{section}

\newcommand{\abs}[1]{\lvert#1\rvert}


\usepackage{amsmath}
\usepackage{cite}
\usepackage{units}
\usepackage[scr=rsfso]{mathalfa}
\newcommand{\LambertL}{\mathscr{L}}

\begin{document}

\title{Rapidly converging formulae for $\zeta(4k\pm 1)$}

\author{Shubho Banerjee}
\address{Department of Physics, Rhodes College, 2000 N. Parkway, Memphis, TN 38112}
\email{banerjees@rhodes.edu}
\email{bwilker849@gmail.com}
\thanks{The first author was supported by the Van Vleet Physics Professorship.}

\author{Blake Wilkerson}
\thanks{The second author was supported by the Mac Armour Physics Fellowship.}

\begin{abstract}
We provide rapidly converging formulae for the Riemann zeta function at odd integers using the Lambert series $\LambertL_q(s) = \sum_{n=1}^\infty n^{s} q^{n}/(1-q^n)$, $s=-(4k\pm 1)$. Our main formula for $\zeta(4k-1)$ converges at rate of about $e^{-\sqrt{15}\pi}$ per term, and the formula for $\zeta(4k+1)$, at the rate of $e^{-4\pi}$ per term. For example, the first order approximation yields $\zeta(3)\approx\frac{\pi ^3 \sqrt{15}}{100} +e^{-\sqrt{15} \pi }\left[\frac{9}{4}+\frac{4}{\sqrt{15}}\sinh (\frac{\sqrt{15} \pi }{2})\right]$ which has an error only of order $10^{-10}$.
\end{abstract}

\keywords{Lambert, Eisenstein, q-series, modular, Riemann zeta, divisor sum.}

\maketitle

\section{Introduction}\label{Introduction}
In a recent paper\,\cite{BW} we analyzed the $\abs{q} \rightarrow 1^-$ asymptotic behavior of a Lambert series of the type
\begin{equation} \label{Lambert}
\hspace{1.02 in} \LambertL_q(s,x) = \sum_{n=1}^\infty \frac{n^s q^{n x}}{1-q^n},  \hspace{0.5 in} s \in \mathbb{C},
\end{equation}
with $\abs{q} < 1$ and $x>0$.
At $x=1$, the Lambert series $\LambertL_q(s,x)$ is the generating function for the divisor function $\sigma_s(n)$, the sum of the $s\text{th}$ powers of divisors of an integer $n$ \cite{Abramowitz}:
\begin{equation}\label{divisor}
\sum_{n=1}^\infty \sigma_s(n) \, q^n = \LambertL_q(s,1).
\end{equation}
For odd positive integer $s$ values, the Lambert series is related to the Eisentein series
\begin{equation}\label{Eisenstein}
E_{2k}(q) = 1 - \frac{4k}{B_{2k}} \sum_{n=1}^\infty \sigma_{2k-1}(n) \, q^n,
\end{equation}
that are modular in nature\cite{Apostol}.

In this paper we focus on the Lambert series at negative odd integer values of $s=-(4k\pm1)$, $k=1,2,3,...$ and at $x=1$. This Lambert series is related to the Riemann zeta function at the corresponding positive integer values, $\zeta(4k\pm 1)$,\cite{Berndt}. Our main results are stated in Sections \ref{Sectionzeta(4k+1)}, \ref{SectionMoreZeta(4k+1)}, and \ref{Sectionzeta(4k-1)} where we provide a series of rapidly converging formulae for $\zeta(4k\pm1)$. Where comparison is possible, our results agree with those obtained using powerful computational methods \cite{Cohen,Plouffe2}.


\section{Lambert series}
\label{Lambert result}

In this section we focus on the Lambert series at negative odd integer arguments and $x=1$. Since $x$ is set to $1$ in this paper, to avoid redundancy we use the notation $\LambertL_q(s,1)\equiv\LambertL_q(s)$.

\begin{theorem}\label{TheoremOddLambert}
For odd negative integers $s$ the Lambert series at $x=1$ satisfies the following relations\emph{:}
\begin{enumerate}
\item For $s = -1$,
\begin{equation*}
\begin{aligned}
\LambertL_{e^{-2\pi t}}(-1)- \LambertL_{e^{-\frac{2\pi}{t}}}(-1)&=\frac{1}{2}\log t - \frac{\pi}{6} \sinh (\log t).
\end{aligned}
\end{equation*}

\item For $s = -(4k-1)=-3,-7,-11,\ldots$,
\begin{equation*}
\begin{aligned}
&\frac{1}{t^{2k-1}}\LambertL_{e^{-2\pi t}}(-4k+1)+t^{2k-1}\LambertL_{e^{-\frac{2\pi}{t}}}(-4k+1)\\
=&\,(2 \pi )^{4 k-1}\sum _{j=0}^{k} \frac{(-1)^{j+1} B_{2 j} B_{4 k-2 j} \cosh [(2 k-2 j) \log t]}{(2 j)!\, (4 k-2 j)!\,(1+\delta_{jk})}\\
 &\, - \zeta (4 k-1)\,\cosh [(2 k-1) \log t].\\
\end{aligned}
\end{equation*}

\item For $s = -(4k+1)=-5,-9,-13,\ldots$,
\begin{equation*}
\begin{aligned}
&\frac{1}{t^{2k}}\LambertL_{e^{-2\pi t}}(-4k-1)-t^{2k}\LambertL_{e^{-\frac{2\pi}{t}}}(-4k-1)\\
=&\,(2 \pi )^{4 k+1}\sum _{j=0}^{k} \frac{(-1)^{j+1} B_{2 j} B_{4 k+2-2 j} \sinh [(2 k+1-2 j) \log t]}{(2 j)!\, (4 k+2-2 j)!}\\
 &\,+ \zeta (4 k+1) \, \sinh (2 k\log t),\\
\end{aligned}
\end{equation*}

\end{enumerate}
\hspace{0.25 in} where $B_k$ is the $k\emph{\text{th}}$ Bernoulli number and $\delta_{jk}$ is the Kronecker delta.
\end{theorem}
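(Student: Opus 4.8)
The plan is to prove all three parts at once by reducing the Lambert series to a hyperbolic cotangent sum and then applying the classical residue method of Berndt. First I would use the elementary identity $\frac{1}{e^{x}-1}=\tfrac12\bigl(\coth(x/2)-1\bigr)$, which for $a>1$ gives $\LambertL_{e^{-2\pi t}}(-a)=\sum_{n\ge 1}\frac{1}{n^{a}(e^{2\pi t n}-1)}=\tfrac12\sum_{n\ge 1}\frac{\coth(\pi t n)}{n^{a}}-\tfrac12\zeta(a)$, and similarly with $t\mapsto 1/t$. This already isolates the $\zeta(a)$ terms appearing in the statement, and reduces everything to establishing a functional equation (under $t\mapsto 1/t$) for the symmetric sum $\sum_{n\ge1}n^{-a}\coth(\pi t n)$.

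The engine is the kernel $\phi(z)=\dfrac{\pi\cot(\pi z)\,\coth(\pi t z)}{z^{a}}$ with $a=4k\mp1$, integrated over expanding squares $C_N$ with vertices $(N+\tfrac12)(\pm1\pm i)$. I would show $\oint_{C_N}\phi\,dz\to 0$ as $N\to\infty$ (this is the step I expect to be the main obstacle: one needs the standard boundedness of $\cot$ and $\coth$ away from their poles on the $C_N$, with the squares chosen to separate the real poles $z=n$ of $\cot(\pi z)$ from the imaginary poles $z=im/t$ of $\coth(\pi t z)$, after which the estimate $|\phi|\le C|z|^{-a}$ and $a\ge 3$ force the integral to vanish). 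Granting this, the sum of all residues of $\phi$ is zero. There are three families: the poles $z=n\neq0$ reproduce $\sum_{n\ge1}n^{-a}\coth(\pi t n)$; the poles $z=im/t$ reproduce $\sum_{m\ge1}m^{-a}\coth(\pi m/t)$ multiplied by $-(-1)^{(a-1)/2}t^{\,a-1}$ (the computation uses $\cot(i\theta)=-i\coth\theta$ and $i^{a}=i\,(-1)^{(a-1)/2}$); and the pole at $z=0$ contributes a finite Bernoulli double sum.

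Assembling these and substituting back the $\coth$--to--Lambert identity yields, after dividing by $t^{2k-1}$ (resp.\ $t^{2k}$), exactly the left-hand sides of parts (2) and (3): the sign $-(-1)^{(a-1)/2}$ equals $+1$ for $a=4k-1$ (giving the $\cosh$-symmetric combination) and $-1$ for $a=4k+1$ (giving the $\sinh$-antisymmetric one), while the leftover constants $\tfrac12\zeta(a)(t^{-(2k-1)}\pm t^{\,2k-1})$ collapse into $\zeta(4k-1)\cosh[(2k-1)\log t]$ and $\zeta(4k+1)\sinh(2k\log t)$. For the residue at $z=0$ I would insert the Laurent expansions $\pi z\cot(\pi z)=\sum_{j\ge0}\frac{(-1)^{j}(2\pi)^{2j}B_{2j}}{(2j)!}z^{2j}$ and $\pi t z\coth(\pi t z)=\sum_{i\ge0}\frac{(2\pi)^{2i}B_{2i}}{(2i)!}(tz)^{2i}$; extracting the coefficient of $z^{-1}$ picks out the diagonal $i+j=2k$ (resp.\ $2k+1$), producing precisely $(2\pi)^{4k\mp1}\sum_j\frac{(-1)^{j+1}B_{2j}B_{4k\mp(2j-1)-\ldots}}{(2j)!(\cdots)!}\,t^{2k-2j}$. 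Pairing the index $j$ with its complement $2k-j$ (resp.\ $2k+1-j$) then converts these monomials into the stated $\cosh[(2k-2j)\log t]$ and $\sinh[(2k+1-2j)\log t]$; the factor $(-1)^{j+1}$ is invariant under this pairing in the $4k-1$ case (hence $\cosh$) but flips sign in the $4k+1$ case (hence $\sinh$), and the self-paired middle term $j=k$ of the even case, not being doubled, is exactly what the $(1+\delta_{jk})$ denominator records.

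Finally, part (1) is the degenerate case $a=1$: here $\zeta(1)$ diverges so the $\coth$-to-Lambert split must be regularized, and the pole of $\phi$ at $z=0$ drops in order. I would treat it as the $a\to1$ limit (or integrate $\phi(z)=\pi\cot(\pi z)\coth(\pi t z)/z$ directly), where the $\tfrac12\log t$ arises from the residue structure at the origin and the single $-\tfrac{\pi}{6}\sinh(\log t)$ term is the surviving $j$-term $B_0B_2$; this explains why part (1) has the anomalous logarithmic term absent from (2)--(3).
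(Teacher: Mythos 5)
Your route is genuinely different from the paper's. The paper does not prove these identities from scratch: parts (2) and (3) are obtained by rearranging the asymptotic expansion in Corollary 2.4(2) of the authors' earlier work, symmetrizing under $t\to 1/t$, and then invoking Berndt's Entry 21(i) to certify that the symmetric completion is exact with no error term; part (1) is read off from the $q$-Pochhammer expansion in that same reference. What you propose is essentially a self-contained proof of Entry 21(i) itself --- the classical residue-calculus proof of Ramanujan's formula for $\zeta(2n+1)$ --- and for parts (2) and (3) your bookkeeping is correct: the poles at $z=n$ give $2\sum_{n\ge 1}n^{-a}\coth(\pi t n)$, those at $z=im/t$ give the same sum at $1/t$ weighted by $-(-1)^{(a-1)/2}t^{a-1}$, the origin gives the Bernoulli convolution over $i+j=2k$ (resp.\ $2k+1$), and the pairing $j\leftrightarrow 2k-j$ (resp.\ $j\leftrightarrow 2k+1-j$) produces the $\cosh$/$\sinh$ structure and the $(1+\delta_{jk})$ exactly as you say, while the $\tfrac12\zeta(a)$ constants from the $\coth$-to-Lambert conversion recombine into the stated $\zeta(4k-1)\cosh[(2k-1)\log t]$ and $\zeta(4k+1)\sinh(2k\log t)$ terms. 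Two technicalities need attention: the horizontal sides of $C_N$ can pass arbitrarily close to the poles $im/t$ of $\coth(\pi t z)$, so you must choose a subsequence of $N$ (or tilt the contour) keeping $(N+\tfrac12)t$ bounded away from integers; and since the paper applies this theorem at complex $t$ such as $(\sqrt3\pm i)/2$ and $(1\pm i)/2$, you should add that both sides are analytic in $t$ on $\mathrm{Re}\,t>0$, so the identity proved for real $t>0$ extends by analytic continuation. Your approach buys independence from the two cited references; the paper's buys brevity at the cost of outsourcing exactness to Berndt.

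The genuine gap is part (1). For $a=1$ your kernel decays only like $|z|^{-1}$, so the integral over $C_N$ is $O(1)$ and does not vanish; worse, the individual residue families $\sum_{n\ne 0}\coth(\pi t n)/n$ and $\sum_{m\ne 0}$ each diverge, so ``the sum of all residues is zero'' is not even well-formed without grouping the two families together. The $\tfrac12\log t$ is precisely the surviving boundary contribution of the contour integral --- not, as you claim, a product of ``the residue structure at the origin'': the origin residue at $a=1$ yields only $\tfrac{2\pi}{3}\sinh(\log t)$, which after the overall factor $-\tfrac14$ is exactly the stated $-\tfrac{\pi}{6}\sinh(\log t)$ term and contains nothing logarithmic. (This is why Siegel's proof of the equivalent statement, the transformation $\eta(-1/\tau)=\sqrt{-i\tau}\,\eta(\tau)$, uses a fixed contour with the large parameter inside the kernel, so that the boundary integral can be evaluated and produces the logarithm.) Your alternative, taking $a\to 1$, is also unavailable as stated: your argument establishes the identity only for the discrete set $a\in\{3,5,7,\ldots\}$, and no limit through that set reaches $a=1$; one would need Ramanujan's formula for continuous $s$, with its additional infinite series of zeta values, which you have not proved. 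The limit is a good consistency check --- with $a=4k+1$, $\zeta(1+4k)\sim 1/(4k)$ gives $\zeta(4k+1)\sinh(2k\log t)\to\tfrac12\log t$, explaining the anomalous term --- but it is not a proof. Part (1) therefore needs a separate argument, e.g.\ Siegel's, or the paper's own route through the $q$-Pochhammer expansion.
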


\begin{remark}
Although the theorem is stated for negative odd integer arguments of the Lambert series, it holds true for positive odd arguments as well. The results for positive odd arguments, obtained by using negative $k$ values in the theorem, reproduce modular properties of the Eisenstein series\,(\ref{Eisenstein}).
\end{remark}

\begin{proof}
For the $s=-1$ case, by taking the logarithm of the complete expansion of the q-Pochhammer symbol provided in the Referee Remark 3.3 of Ref.\,\cite{BW} we get
\begin{equation}\label{Euler}
\begin{aligned}
\LambertL_{q}(-1)&=\frac{\pi^2}{6 t}+\frac{1}{2}\log \frac{t}{2\pi} -\frac{t}{24}+\sum_{n=1}^\infty \textrm{Li}_1 \big(e^{-\frac{4n\pi^2}{t}}\big)\\
&=\frac{\pi^2}{6 t}+\frac{1}{2}\log \frac{t}{2\pi} -\frac{t}{24}+\LambertL_{\!e^{-\frac{4\pi^2}{t}}}(-1),\\
\end{aligned}
\end{equation}
where $q=e^{-t}$ and $\textrm{Li}_s(q)$ is the polylogarithm function. Replacing $t$ with $2\pi t$ and rearranging the terms completes the proof.

For $s=-4k+1$ we begin with the result in Corollary 2.4(2) of Ref.\,\cite{BW}. Replacing $t$ with $2\pi t$, writing $\zeta^{\hspace{0.01in}\prime}(2-4k)$ in terms of $\zeta(4k-1)$, and writing the zeta function at even arguments in terms of Bernoulli numbers gives
\begin{equation*}
\begin{aligned}
&\frac{1}{t^{2k-1}}\LambertL_{e^{-2\pi t}}(-4k+1)\\
\approx\,&(2 \pi )^{4 k-1}\left[\frac{(-1)^{k+1} B_{2 k}^2}{(2 k)!^2 \, 2 }+\sum _{j=0}^{k-1} \frac{(-1)^{j+1} B_{2 j} B_{4 k-2 j} \cosh [(2 k-2 j) \log t]}{(2 j)!\, (4 k-2 j)!}\right]\\
 &\, -\zeta (4 k-1)\,\cosh [(2 k-1) \log t]. \\
\end{aligned}
\end{equation*}
The right hand side of the equation above is completely symmetric with respect to a $t \rightarrow 1/t$ transformation. Thus adding $t^{2k-1}\LambertL_{e^{-\frac{2\pi}{t}}}(-4k+1)$ to the left hand side of the equation to make it completely symmetric yields the result in the theorem. Note that such symmetry arguments only guarantee numerical accuracy. However, a comparison with Entry 21(i) in \cite{Berndt} shows that the result is exact, and there are no other error terms. In the final result the $B_{2k}^2$ term is included in the sum over $j$ by using $\delta_{jk}$ and extending the limit of the sum from $k\!-\!1$ to $k$.

Similarly, for $s=-4k-1$ case from Corollary 2.4(2) of Ref.\,\cite{BW} we obtain
\begin{equation*}
\begin{aligned}
&\frac{1}{t^{2k}}\LambertL_{e^{-2\pi t}}(-4k-1)\\
\approx &\,(2 \pi )^{4 k+1}\sum _{j=0}^{k} \frac{(-1)^{j+1} B_{2 j} B_{4 k+2-2 j} \sinh [(2 k+1-2 j) \log t]}{(2 j)!\, (4 k+2-2 j)!}\\
 &\,+ \zeta (4 k+1) \, \sinh (2 k\log t).\\
\end{aligned}
\end{equation*}
In this case the right hand side of the equation is antisymmetric with respect to a $t \rightarrow 1/t$ transformation. Thus, we add $t^{2k}\LambertL_{e^{-\frac{2\pi}{t}}}(-4k-1)$ to the left hand side of the equation to make the equation completely antisymmetric. As in the case above, a comparison with Entry 21(i) in \cite{Berndt} shows that the result is exact, which yields the result in the theorem.

\end{proof}

\begin{corollary}\label{CorollaryZeta(4k-1)}
For positive odd integers $(4k-1)=3,7,11,...$ we have

\begin{equation*}
\begin{aligned}
\zeta(4k-1)&=(2 \pi)^{4 k-1}\! \sum _{j=0}^{k} \frac{(-1)^{j+1} B_{2 j}\, B_{4 k-2 j}}{(2 j)! \, (4 k-2 j)!\,(1+\delta _{jk})}   -2\LambertL_{e^{-2\pi}}(-4k+1).\\
\end{aligned}
\end{equation*}
\end{corollary}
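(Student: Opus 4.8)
The plan is to evaluate the symmetric identity in Theorem~\ref{TheoremOddLambert}(2) at the self-dual point $t=1$, which is the fixed point of the $t\mapsto 1/t$ transformation under which that identity was constructed. First I would record the two simplifications that occur there. Since $\log t = 0$ at $t=1$, every hyperbolic cosine on the right-hand side collapses to unity: $\cosh[(2k-2j)\log t]=1$ for each $j$ in the sum, and $\cosh[(2k-1)\log t]=1$ in the zeta term. On the left-hand side both prefactors $t^{-(2k-1)}$ and $t^{2k-1}$ become $1$, and the two Lambert series coincide because the arguments satisfy $e^{-2\pi t}=e^{-2\pi/t}=e^{-2\pi}$ at $t=1$; hence the left-hand side reduces to the single doubled term $2\LambertL_{e^{-2\pi}}(-4k+1)$.

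Substituting these evaluations into Theorem~\ref{TheoremOddLambert}(2) yields
\[
2\LambertL_{e^{-2\pi}}(-4k+1) = (2\pi)^{4k-1}\sum_{j=0}^{k} \frac{(-1)^{j+1} B_{2j}\, B_{4k-2j}}{(2j)!\,(4k-2j)!\,(1+\delta_{jk})} - \zeta(4k-1).
\]
Solving this single linear relation for $\zeta(4k-1)$ then gives the stated formula directly, with no further analysis required.

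I expect no substantive obstacle, since the corollary is a pointwise specialization rather than a new identity. The one step deserving explicit verification is the collapse of the left-hand side to twice a single Lambert series: one must confirm that the two apparently distinct terms $t^{-(2k-1)}\LambertL_{e^{-2\pi t}}(-4k+1)$ and $t^{2k-1}\LambertL_{e^{-2\pi/t}}(-4k+1)$ genuinely coincide at $t=1$, which follows because the $q$-arguments become equal and both powers of $t$ become $1$ simultaneously. As a forward-looking remark, this choice $t=1$ fixes $q=e^{-2\pi}$, so the residual Lambert series $\LambertL_{e^{-2\pi}}(-4k+1)=\sum_{n\ge 1} n^{-(4k-1)} e^{-2\pi n}/(1-e^{-2\pi n})$ converges geometrically at rate $e^{-2\pi}$ per term; later sections improve this rate by retaining $t$ as a free parameter and optimizing it rather than setting $t=1$.
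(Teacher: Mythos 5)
Your proposal is correct and is exactly the paper's own proof: the paper likewise substitutes $t=1$ into Theorem~\ref{TheoremOddLambert}(2), where all the hyperbolic cosines equal $1$ and the two Lambert series merge into $2\LambertL_{e^{-2\pi}}(-4k+1)$, and then solves for $\zeta(4k-1)$. Your write-up simply makes these routine evaluations explicit.
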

\begin{proof}
Substituting $t=1$ in Theorem \ref{TheoremOddLambert}(2) and solving for $\zeta(4k-1)$ completes the proof.
\end{proof}

\begin{corollary}\label{CorollaryZeta(4k+1)}
For positive odd integers $(4k+1)=5,9,13,...$ we have

\begin{equation*}
\begin{aligned}
\zeta (4 k+1)&=(2 \pi )^{4 k+1}\sum _{j=0}^k \frac{(-1)^j (2 k+1-2 j)B_{2 j} B_{4 k+2-2 j}}{2 k \,(2 j)! \,(4 k+2-2 j)!}\\
&~~~-\frac{2 \pi e^{-2 \pi }   \LambertL'_{\!e^{-2 \pi }}\!\left(-4 k-1\right)}{k}-2  \LambertL_{\!e^{-2 \pi }}\!\left(-4 k-1\right),
\end{aligned}
\end{equation*}
where $\LambertL' = d\LambertL/dq$.
\end{corollary}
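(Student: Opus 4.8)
The plan is to extract $\zeta(4k+1)$ from Theorem \ref{TheoremOddLambert}(3) in the same spirit as the proof of Corollary \ref{CorollaryZeta(4k-1)}, but with one essential modification. In part (2) the $t\to 1/t$-symmetric terms involved $\cosh$, which equals $1$ at $t=1$, so a direct substitution isolated $\zeta(4k-1)$ immediately. In part (3), by contrast, every term on the right-hand side carries a factor $\sinh[(2k+1-2j)\log t]$ or $\sinh(2k\log t)$, all of which vanish at $t=1$; the naive substitution therefore collapses to the trivial identity $0=0$. To obtain nontrivial information I will instead differentiate the entire identity with respect to $t$ and only then set $t=1$. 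It is cleanest to pass to the variable $u=\log t$, so that $t=1$ corresponds to $u=0$ and the hyperbolic functions differentiate transparently.

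In the variable $u$, the right-hand side derivative is immediate: since $\tfrac{d}{du}\sinh(au)\big|_{u=0}=a$, each $\sinh[(2k+1-2j)u]$ contributes its frequency $(2k+1-2j)$ and the $\zeta(4k+1)\sinh(2ku)$ term contributes $2k\,\zeta(4k+1)$, giving
\begin{equation*}
(2\pi)^{4k+1}\sum_{j=0}^{k}\frac{(-1)^{j+1}(2k+1-2j)B_{2j}B_{4k+2-2j}}{(2j)!\,(4k+2-2j)!}+2k\,\zeta(4k+1).
\end{equation*}
The main work is differentiating the left-hand side, $e^{-2ku}\LambertL_{e^{-2\pi e^{u}}}(-4k-1)-e^{2ku}\LambertL_{e^{-2\pi e^{-u}}}(-4k-1)$, which requires the product and chain rules together with the dependence of $q$ on $u$. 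For the first term, $q=e^{-2\pi e^{u}}$ gives $dq/du=-2\pi e^{u}q$, so its $u$-derivative at $u=0$ is $-2k\,\LambertL_{e^{-2\pi}}(-4k-1)-2\pi e^{-2\pi}\LambertL'_{e^{-2\pi}}(-4k-1)$. For the second term, $q=e^{-2\pi e^{-u}}$ gives $dq/du=2\pi e^{-u}q$, so its $u$-derivative at $u=0$ is $2k\,\LambertL_{e^{-2\pi}}(-4k-1)+2\pi e^{-2\pi}\LambertL'_{e^{-2\pi}}(-4k-1)$, which enters the left-hand side with a minus sign. The two contributions therefore reinforce, so the $u$-derivative of the left-hand side at $u=0$ is $-4k\,\LambertL_{e^{-2\pi}}(-4k-1)-4\pi e^{-2\pi}\LambertL'_{e^{-2\pi}}(-4k-1)$. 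Equating the two derivatives and solving the resulting linear equation for $\zeta(4k+1)$ (dividing through by $2k$ and absorbing the sign $(-1)^{j+1}\mapsto(-1)^{j}$) produces the stated formula.

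The only genuine obstacle is the bookkeeping in the chain-rule differentiation of the Lambert series on the left. One must carefully track both the explicit $e^{\pm 2ku}$ prefactors and the implicit $u$-dependence entering through $q$, and in particular verify that the two $\LambertL'$ contributions add rather than cancel. This cancellation-versus-reinforcement hinges on the opposite signs of $dq/du$ in the two terms combined with the overall minus sign between them, and it is exactly what yields the factor $4\pi e^{-2\pi}\LambertL'_{e^{-2\pi}}(-4k-1)$ that becomes the $\LambertL'$ term in the corollary after division by $2k$.
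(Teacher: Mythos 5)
Your proposal is correct and follows exactly the paper's own route: the paper's proof is precisely ``differentiating Theorem \ref{TheoremOddLambert}(3) with respect to $t$ at $t=1$ and solving for $\zeta(4k+1)$,'' and your computation (including the reinforcement of the two $\LambertL'$ contributions and the sign flip $(-1)^{j+1}\mapsto(-1)^{j}$ upon dividing by $2k$) carries this out accurately. The substitution $u=\log t$ is a purely cosmetic convenience, since $d/dt$ and $d/du$ agree at $t=1$.
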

\begin{proof}
Differentiating Theorem \ref{TheoremOddLambert}(3) with respect to $t$ at $t=1$ and solving for $\zeta(4k+1)$ completes the proof.
\end{proof}

\begin{remark}
The results in Corollaries \ref{CorollaryZeta(4k-1)} and \ref{CorollaryZeta(4k+1)} are well known\,\cite{Berndt,Cohen,Sondow} but have been reproduced here for the sake of completeness. The results hold true for negative values of $k$ as well.
\end{remark}

\section{Formulae for $\zeta(4k+1)$}\label{Sectionzeta(4k+1)}
In Corollary\,\ref{CorollaryZeta(4k+1)} we established a formula for $\zeta(4k+1)$ that involved the Lambert series and its derivative with respect to $q$ at $q=e^{-2\pi}$. In this section we establish three new formulae for $\zeta(4k+1)$ that are in terms of the Lambert series only, and not its derivative.

First, we prove a general identity for the Lambert series that is true for all $s$ values in the form of the following lemma.

\begin{lemma} \label{LemmaLambert p=p}
For any positive prime integer $p$ and $s\in \mathbb{C}$ the Lambert series at $x=1$ satisfies\emph{:}
\begin{equation*}
\sum_{n=0}^{p-1} \LambertL_{q^{\frac{1}{p}} e^{i\frac{2\pi n}{p} }}(s) = (p^{s+1} +p)\LambertL_{q}(s)- p^{s+1}\LambertL_{q^p}(s).
\end{equation*}
\end{lemma}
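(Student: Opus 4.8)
The plan is to expand every Lambert series appearing on both sides as a power series in its nome weighted by the divisor function $\sigma_s$, exploit orthogonality of the $p$-th roots of unity to isolate the indices divisible by $p$, and finally collapse the resulting divisor sum using a recurrence that holds precisely because $p$ is prime. Throughout, the hypothesis $\abs{q}<1$ guarantees absolute convergence, so all rearrangements of the double sums are legitimate.

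First I would recall from the divisor generating function \eqref{divisor} that $\LambertL_q(s)=\sum_{N=1}^\infty \sigma_s(N)\,q^N$. Writing $\omega_n=e^{i2\pi n/p}$ and substituting the nome $q^{1/p}\omega_n$, the left-hand side becomes
\[
\sum_{n=0}^{p-1}\LambertL_{q^{1/p}\omega_n}(s)=\sum_{N=1}^\infty \sigma_s(N)\,q^{N/p}\sum_{n=0}^{p-1}\omega_n^{N}.
\]
The inner geometric sum equals $p$ when $p\mid N$ and vanishes otherwise, for \emph{any} $p$; hence only the multiples $N=pM$ survive, and the left-hand side reduces to $p\sum_{M=1}^\infty \sigma_s(pM)\,q^{M}$. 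Note that primality has not yet been used.

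Next I would invoke the divisor recurrence $\sigma_s(pM)=(1+p^s)\sigma_s(M)-p^s\sigma_s(M/p)$, valid for prime $p$ (with the convention $\sigma_s(M/p)=0$ when $p\nmid M$); this is the single place where primality is essential, and it follows from multiplicativity of $\sigma_s$ together with the prime-power identity $\sigma_s(p^{a+1})=(1+p^s)\sigma_s(p^a)-p^s\sigma_s(p^{a-1})$. Substituting this recurrence and splitting the sum gives $p(1+p^s)\sum_M \sigma_s(M)q^M-p^{s+1}\sum_{p\mid M}\sigma_s(M/p)q^M$. The first sum is $\LambertL_q(s)$, while in the second I set $M=pL$ to recognize $\sum_L \sigma_s(L)q^{pL}=\LambertL_{q^p}(s)$. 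Collecting terms and using $p(1+p^s)=p^{s+1}+p$ yields $(p^{s+1}+p)\LambertL_q(s)-p^{s+1}\LambertL_{q^p}(s)$, which is the claim.

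The only genuine content—and the step I would treat most carefully—is the divisor recurrence, since it is exactly what distinguishes prime $p$ (for composite $p$ the identity fails, e.g.\ $p=4$, $s=0$, where the coefficient of $q^1$ already disagrees). An entirely equivalent route avoids the divisor function: summing the kernel over roots of unity via $\sum_{\zeta^p=1}(1-w\zeta)^{-1}=p(1-w^p)^{-1}$, one finds that $m^s(q^{1/p}\omega_n)^m/(1-(q^{1/p}\omega_n)^m)$ summed over $n$ contributes $p\,m^s q^m/(1-q^m)$ when $p\nmid m$ (using that $n\mapsto nm \bmod p$ permutes the residues, which again needs primality) and $p^{s+1}\ell^s q^\ell/(1-q^\ell)$ when $m=p\ell$; reassembling these two families reproduces the same right-hand side.
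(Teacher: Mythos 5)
Your proposal is correct and follows essentially the same route as the paper's own proof: expand each Lambert series via the divisor-function generating identity, use the roots-of-unity filter to extract the terms with index divisible by $p$, and then apply the recurrence $\sigma_s(pM)=(1+p^s)\sigma_s(M)-p^s\sigma_s(M/p)$ for prime $p$ before reassembling the two sums into $\LambertL_q(s)$ and $\LambertL_{q^p}(s)$. Your added care about absolute convergence and your explicit remark on where primality enters are fine refinements of, but not departures from, the paper's argument.
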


\begin{proof}
Writing the Lambert series in terms of the divisor function using (\ref{divisor}) gives
\begin{equation*}
\begin{aligned}
\frac{1}{p}\sum_{n=0}^{p-1} \LambertL_{q^{\frac{1}{p}} e^{i\frac{2\pi n}{p} }}(s) &= \frac{1}{p} \sum_{n=0}^{p-1} \sum_{k = 1}^\infty \sigma_s(k) q^{\frac{k}{p}} e^{i\frac{2\pi n k}{p} }\\
&= \frac{1}{p} \sum_{k = 1}^\infty \sigma_s(k) q^{\frac{k}{p}} \sum_{n=0}^{p-1} e^{i\frac{2\pi n k}{p} }\\
&= \sum_{l = 1}^\infty \sigma_s(l p)\, q^{l}\\
&= (p^s+1) \sum_{l = 1}^\infty \sigma_s(l)\, q^{l} -p^s \sum_{p |l}^\infty \sigma_s(\tfrac{l}{p})\, q^{l}\\
&= (p^s+1) \sum_{l = 1}^\infty \sigma_s(l)\, q^{l}-p^s \sum_{m=1}^\infty \sigma_s(m)\, q^{m p},\\
\end{aligned}
\end{equation*}
where  $p |l$ indicates  sum only over integers $l$ that are divisible by $p$. Rewriting the two infinite sums in terms of Lambert series using (\ref{divisor}) and multiplying by $p$ on both sides proves the lemma.
In the proof above we used the following property of the divisor function\,\cite{Abramowitz}
\begin{equation*}
\begin{aligned}
\sigma_s(l p)= (p^s+1)\sigma_s(l) -p^s \sigma_s(\tfrac{l}{p}),
\end{aligned}
\end{equation*}
where $\sigma_s(\tfrac{l}{p})$ is non-zero only when $p$ divides $l$.
\end{proof}

\begin{remark}
For $p\ne 2 $ the limits of the sum over $n$ in the lemma can be replaced with the symmetric limits ranging from $-(p-1)/2$ to $(p-1)/2$. It is this symmetric range that we use in later proofs when $p=3$ and $p=5$.
\end{remark}

\begin{theorem}\label{TheoremZeta4k+1p=2}
The Riemann zeta function at odd integers $4k+1$ other than unity can be written in terms of Lambert series at $q=e^{-2 \pi}$ and $e^{-4 \pi}$ as\emph{:}
\begin{equation*}
\begin{aligned}
\zeta(4k+1)&=(2\pi)^{4k+1}\sum _{j=0}^k \frac{(-1)^{j+1} b_{jk}\, B_{2 j} B_{4 k+2-2 j} }{(2j)!(4 k+2-2 j)!}\\
&\hspace{0.175in}-\frac{2a_k+4}{a_k}\LambertL_{e^{-2 \pi }}(-4 k-1)+\frac{4}{a_k}\LambertL_{e^{-4 \pi }}(-4 k-1),\\
\end{aligned}
\end{equation*}
where
\begin{equation*}
\begin{aligned}
a_k&=2^{4 k+1}-(-1)^k 2^{2 k}-1,\\
b_{jk}&=\frac{2^{2 j-1} \left[1+(1+i)^{4 k+1-2 j}\right]-2^{4 k+1-2 j}\left[1+(1+i)^{2 j-1}\right]}{a_k}.\\
\end{aligned}
\end{equation*}
\end{theorem}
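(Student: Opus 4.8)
The plan is to produce three independent linear relations among the four Lambert series $\LambertL_{e^{-\pi}}(-4k-1)$, $\LambertL_{-e^{-\pi}}(-4k-1)$, $\LambertL_{e^{-2\pi}}(-4k-1)$ and $\LambertL_{e^{-4\pi}}(-4k-1)$, then eliminate the two series based at $\pm e^{-\pi}$ and solve the surviving equation for $\zeta(4k+1)$. The half-period point $e^{-\pi}$ and its reflection $-e^{-\pi}=e^{-\pi+i\pi}$ are the natural intermediaries, since the multiplier relation of Lemma \ref{LemmaLambert p=p} at $p=2$ ties them to the target points $e^{-2\pi}$ and $e^{-4\pi}=(e^{-2\pi})^2$, while the functional equation of Theorem \ref{TheoremOddLambert}(3) ties them to $\zeta(4k+1)$.

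Concretely, I would first apply Lemma \ref{LemmaLambert p=p} with $p=2$, $s=-(4k+1)$ and $q=e^{-2\pi}$; since the two square roots of unity are $\pm1$ this yields the purely algebraic relation
\begin{equation*}
\LambertL_{e^{-\pi}}(-4k-1)+\LambertL_{-e^{-\pi}}(-4k-1)=\bigl(2^{-4k}+2\bigr)\LambertL_{e^{-2\pi}}(-4k-1)-2^{-4k}\LambertL_{e^{-4\pi}}(-4k-1),
\end{equation*}
carrying no Bernoulli or $\zeta$ terms. Next I would evaluate Theorem \ref{TheoremOddLambert}(3) at the real point $t=2$, giving $e^{-2\pi t}=e^{-4\pi}$ and $e^{-2\pi/t}=e^{-\pi}$ and hence a relation between $\LambertL_{e^{-4\pi}}$, $\LambertL_{e^{-\pi}}$ and $\zeta(4k+1)$; and then at the complex point $t=(1+i)/2$, which satisfies $1/t=1-i$, so that $e^{-2\pi t}=-e^{-\pi}$ and $e^{-2\pi/t}=e^{-2\pi}$, producing a relation between $\LambertL_{-e^{-\pi}}$, $\LambertL_{e^{-2\pi}}$ and $\zeta(4k+1)$. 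Both sides of Theorem \ref{TheoremOddLambert}(3) are analytic on the half-plane $\Re(t)>0$, where both Lambert series converge, so the identity extends from the positive real axis to $t=(1+i)/2$ by analytic continuation; the $\sinh$ of integer multiples of $\log t$ carry no branch ambiguity.

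With the three relations in hand I would solve the $t=2$ equation for $\LambertL_{e^{-\pi}}$ and the $t=(1+i)/2$ equation for $\LambertL_{-e^{-\pi}}$, substitute both into the Lemma relation, and isolate $\zeta(4k+1)$. Using $t^{2k}=i^k2^{-k}$ and $t^{4k}=(-1)^k2^{-2k}$ and clearing a factor $2^{4k}$, the coefficients of the surviving series collapse to $4/a_k$ and $-(2a_k+4)/a_k$, with $a_k=2^{4k+1}-(-1)^k2^{2k}-1$ emerging exactly as the coefficient of $\zeta(4k+1)$; the cancellations rest on $(1+i)^2=2i$ and $i^k(-i)^k=1$. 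The Bernoulli part is the combination $-2^{2k}\sinh[(2k+1-2j)\log 2]+i^k2^{3k}\sinh[(2k+1-2j)\log t]$ of the two Bernoulli sums appearing on the right-hand side of Theorem \ref{TheoremOddLambert}(3); expanding each $\sinh$ in exponential form and using $(1-i)=-i(1+i)$ together with $(1+i)^{2k}=2^ki^k$ rewrites it as the stated coefficient $b_{jk}$.

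I expect the main obstacle to be this closed-form reduction to $b_{jk}$ rather than the linear algebra. One must track the phase $i^{k+m}$ with $m=2k+1-2j$ carefully and invoke the identity $i^m\bigl[(1+i)^{m}+(1+i)^{-m}\bigr]=2^{-m}(1+i)^m-2^m(1+i)^{-m}$, valid for odd $m$, to pass from the symmetric exponential form to the asymmetric numerator $2^{2j-1}[1+(1+i)^{4k+1-2j}]-2^{4k+1-2j}[1+(1+i)^{2j-1}]$. A secondary point to flag is that the individual $b_{jk}$ are complex; the final check is that their imaginary parts cancel across the sum over $j$, leaving the real number $\zeta(4k+1)$. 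This reality is automatic, since the conjugate choice $t=(1-i)/2$ produces the complex-conjugate coefficients and hence the identical real sum.
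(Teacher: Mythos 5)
Your proposal is correct and is essentially the paper's own proof: the same three relations---Theorem~\ref{TheoremOddLambert}(3) at a real point pairing $e^{-\pi}$ with $e^{-4\pi}$, Theorem~\ref{TheoremOddLambert}(3) at a complex point pairing $-e^{-\pi}$ with $e^{-2\pi}$, and Lemma~\ref{LemmaLambert p=p} with $p=2$ at $q=e^{-2\pi}$---followed by the same elimination of the two series at $\pm e^{-\pi}$ and solving for $\zeta(4k+1)$. Your evaluation points $t=2$ and $t=(1+i)/2$ differ from the paper's $t=\tfrac12$ and $t=\tfrac{1}{1+i}$ only by the antisymmetry $t\to 1/t$ and by complex conjugation, so the resulting equations are the same.
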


\begin{proof}
Applying Theorem\,\ref{TheoremOddLambert}(3) at $t=\frac{1}{2}$ gives

\begin{equation}\label{t=2}
\begin{aligned}
&~2^{2 k} \LambertL_{e^{- \pi }}(-4 k-1)-\frac{1}{2^{2 k}} \LambertL_{e^{-4 \pi }}(-4 k-1)\\
=&\,-(2 \pi )^{4 k+1} \sum _{j=0}^k \frac{(-1)^{j+1} B_{2 j} B_{4 k+2-2 j} \sinh (\log 2^{2 k+1-2j})}{(2 j)! \, (4 k+2-2 j)!}\\
&\,-\zeta (4 k+1) \sinh (\log 2^{2 k}).
\end{aligned}
\end{equation}
Applying Theorem\,\ref{TheoremOddLambert}(3) at $t=\frac{1}{2}-\frac{i}{2}=1/(1+ i)$ gives
\begin{equation}\label{t=1+i}
\begin{aligned}
&~(1+ i)^{2 k}\LambertL_{-e^{- \pi }}(-4 k-1,1)-\frac{1}{(1+ i)^{2 k}} \LambertL_{e^{-2 \pi }}(-4 k-1)\\
=&-(2 \pi )^{4 k+1} \sum _{j=0}^k \frac{(-1)^{j+1} B_{2 j} B_{4 k+2-2 j} \sinh [\log (1+ i)^{2 k+1-2j}]}{(2 j)! \, (4 k+2-2 j)!}\\
&-\zeta (4 k+1) \sinh [\log (1+ i)^{2 k}].\\
\end{aligned}
\end{equation}

Applying Lemma\,\ref{LemmaLambert p=p} at $q=e^{-2\pi}$ with $p=2$ gives

\begin{equation}\label{p=2}
\begin{aligned}
&\LambertL_{-e^{- \pi }}(-4 k-1)+\LambertL_{e^{- \pi }}(-4 k-1) \\
= &\,(2^{-4k} +2)\LambertL_{e^{-2\pi}}(-4k-1)- 2^{-4k}\LambertL_{e^{-4\pi}}(-4k-1).
\end{aligned}
\end{equation}
Eliminating $\LambertL_{-e^{- \pi }}(-4 k-1)$ and $\LambertL_{e^{- \pi }}(-4 k-1)$ from Equations (\ref{t=2}), (\ref{t=1+i}), and (\ref{p=2}) and solving for $\zeta(4k+1)$ in terms of $\LambertL_{e^{- 2\pi }}(-4 k-1)$ and $\LambertL_{e^{- 4\pi }}(-4 k-1)$ completes the proof of the theorem.
\end{proof}

\begin{theorem}\label{TheoremZeta4k+1p=3}
The Riemann zeta function at odd integers $4k+1$ other than unity can be written in terms of Lambert series at $q=-e^{-3 \pi}$, $e^{-4 \pi}$, and $e^{-6 \pi}$ as\emph{:}
\begin{equation*}
\begin{aligned}
\zeta(4k+1)
&=\frac{(2\pi)^{4k+1}}{2b_k}\!\sum _{j=0}^k \frac{(-1)^{j+1} c_{jk}\, B_{2 j} B_{4 k+2-2 j} }{(2j)!(4 k+2-2 j)!}+\frac{(-1)^k 2^{2 k+1}}{b_k}\LambertL_{-e^{-3 \pi }}(-4 k-1)\\
&\hspace{0.175in}-\frac{a_k }{2^{4 k-1} b_k}\LambertL_{e^{-4 \pi }}(-4 k-1) +\frac{2^{2 k} }{2^{2 k-1} b_k}\LambertL_{e^{-6 \pi }}(-4 k-1)\\
\end{aligned}
\end{equation*}
where
\begin{equation*}
\begin{aligned}
a_k&=\frac{2^{4 k} \left(3^{4 k+1}+1\right)}{2^{4 k+1}-(-1)^k 2^{2 k}+1},\\
b_k&=\frac{3^{4 k+1}-1}{2}-(-1)^k \, 2^{2 k}-a_k\left[\frac{2^{4 k+1}-(-1)^k 2^{2k}-1}{2^{4 k+1}}\right],\\
c_{jk}&=3^{2 j-1} \sum _{n=-1}^1 (1+i n)^{4 k+1-2 j}- 3^{4 k+1-2 j}\sum _{n=-1}^1 (1+i n)^{2 j-1}\\
&\hspace{0.14in}-a_k\left[\frac{ 1+(1+i)^{4 k+1-2 j}}{2^{4 k+1-2 j}}-\frac{1+(1+i)^{2 j-1} }{2^{2 j-1}}\right].\\
\end{aligned}
\end{equation*}
\end{theorem}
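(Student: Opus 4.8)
The plan is to reproduce the argument of Theorem~\ref{TheoremZeta4k+1p=2} with $p=3$ in place of $p=2$, and then to feed the $p=2$ result back in at the very end in order to clear the term in $\LambertL_{e^{-2\pi}}(-4k-1)$. The organizing observation is that the three cube-root-of-unity shifts $q^{1/3}e^{i2\pi n/3}$ appearing in Lemma~\ref{LemmaLambert p=p} at $p=3$ are exactly the bases $e^{-2\pi t_n}$ produced by Theorem~\ref{TheoremOddLambert}(3) at the three complex nodes $t_n=(1-in)/3$, $n\in\{-1,0,1\}$, and that the companion bases $e^{-2\pi/t_n}$ are precisely the rapidly converging series we want: $t_0=1/3$ sends $e^{-2\pi/t_0}=e^{-6\pi}$, while $t_{\pm1}=(1\mp i)/3$ both send $e^{-2\pi/t_{\pm1}}=-e^{-3\pi}$.

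First I would evaluate Theorem~\ref{TheoremOddLambert}(3) at $s=-(4k+1)$ and $t=t_n$, multiply the $n$-th identity by $t_n^{2k}$ so that the coefficient of $\LambertL_{e^{-2\pi/3}e^{i2\pi n/3}}(-4k-1)$ becomes $1$, and sum over $n\in\{-1,0,1\}$ using the symmetric range permitted by the remark after Lemma~\ref{LemmaLambert p=p}. The resulting sum of the three shifted series is then replaced, via Lemma~\ref{LemmaLambert p=p} at $q=e^{-2\pi}$ and $p=3$ (so that $p^{s+1}=3^{-4k}$), by $(3^{-4k}+3)\LambertL_{e^{-2\pi}}(-4k-1)-3^{-4k}\LambertL_{e^{-6\pi}}(-4k-1)$. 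On the other side, the companion series $\LambertL_{e^{-2\pi/t_n}}(-4k-1)$ collapse to $\LambertL_{e^{-6\pi}}$ (from $n=0$) and $\LambertL_{-e^{-3\pi}}$ (from $n=\pm1$), the latter appearing with total weight $t_1^{4k}+t_{-1}^{4k}=3^{-4k}[(1-i)^{4k}+(1+i)^{4k}]=3^{-4k}(-1)^k2^{2k+1}$ after using $(1\pm i)^4=-4$. Collecting the $\sinh$ terms, each factor $t_n^{2k}\sinh[(2k+1-2j)\log t_n]=\tfrac12(t_n^{4k+1-2j}-t_n^{2j-1})$ sums over $n$ into the combinations $\sum_{n=-1}^1(1+in)^{4k+1-2j}$ and $\sum_{n=-1}^1(1+in)^{2j-1}$ that make up $c_{jk}$, while the $\zeta$-terms assemble into a single multiple of $\zeta(4k+1)$. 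Solving this relation expresses $\zeta(4k+1)$ through $\LambertL_{e^{-2\pi}}$, $\LambertL_{-e^{-3\pi}}$, and $\LambertL_{e^{-6\pi}}$.

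The final step is to eliminate $\LambertL_{e^{-2\pi}}(-4k-1)$ by substituting Theorem~\ref{TheoremZeta4k+1p=2}, which ties it (equivalently, $\zeta(4k+1)$) to $\LambertL_{e^{-4\pi}}(-4k-1)$; this is where the $e^{-4\pi}$ series and the extra $-a_k[\cdots]$ piece of $c_{jk}$ enter, the bracket being exactly the Bernoulli coefficient $b_{jk}$ of Theorem~\ref{TheoremZeta4k+1p=2} rescaled by $2^{4k}$. The weight $a_k$ carried into this substitution accounts for the factor $3^{4k+1}+1$, which is $3^{4k}(p^{s+1}+p)$ from Lemma~\ref{LemmaLambert p=p}, and regrouping all $\zeta$-contributions onto one side produces the normalization $2b_k$. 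I expect the main obstacle to be organizational rather than conceptual: verifying that after these two substitutions every Bernoulli and $\zeta$ contribution condenses exactly into the stated $a_k$, $b_k$, and $c_{jk}$, in particular that the combined coefficient of $\zeta(4k+1)$ is precisely $2b_k$ and that $\LambertL_{e^{-2\pi}}(-4k-1)$ cancels identically. As in Theorem~\ref{TheoremZeta4k+1p=2}, one should also confirm that evaluating the functional equation at the complex nodes $t_n$ introduces no spurious terms.
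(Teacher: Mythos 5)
Your proposal is correct and follows essentially the same route as the paper's proof: Theorem~\ref{TheoremOddLambert}(3) at the nodes $t=\tfrac13,\ \tfrac13\pm\tfrac{i}{3}$, Lemma~\ref{LemmaLambert p=p} with $p=3$ at $q=e^{-2\pi}$ in its symmetric form, and Theorem~\ref{TheoremZeta4k+1p=2} as the fifth relation used to eliminate $\LambertL_{e^{-2\pi}}(-4k-1)$. Your explicit weighting of the $n$-th identity by $t_n^{2k}$ and summing merely carries out concretely the elimination the paper leaves implicit, and your intermediate checks (e.g.\ $t_1^{4k}+t_{-1}^{4k}=3^{-4k}(-1)^k 2^{2k+1}$) are consistent with the stated coefficients.
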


\begin{remark}This formula converges at about 4.09 digits for each additional term in the Lambert series. The first few examples of this theorem are listed in the Appendix. If needed, the Lambert series at $q=-e^{-3 \pi}$ can be replaced by Lambert series at positive only $q$ values using Lemma\,\ref{LemmaLambert p=p} at $q=e^{-6 \pi}$ with $p=2$,
\begin{equation*}
\begin{aligned}
\LambertL_{-e^{-3 \pi }}(-4 k-1)=&-\LambertL_{e^{-3 \pi }}(-4 k-1) +(2^{-4k} +2)\LambertL_{e^{-6\pi}}(-4k-1)\\
&- 2^{-4k}\LambertL_{e^{-12\pi}}(-4k-1).\\
\end{aligned}
\end{equation*}
However, the result as stated in the theorem is more compact and has the same rate of convergence as the one with positive $q$ values. Similar replacement for Lambert series at negative $q$ values can be made in Theorems \ref{TheoremZeta4k+1p=5} and \ref{TheoremZeta4k+1root3} as well.
\end{remark}

\begin{proof}
The proof is a generalization of the proof of Theorem\,\ref{TheoremZeta4k+1p=2} to the $p=3$ case. Applying Theorem\,\ref{TheoremOddLambert}(3) at $t=\frac{1}{3}$ and $t=\frac{1}{3}\pm\frac{i}{3}$ yields three equations involving $\LambertL_{e^{-\frac{2\pi }{3}}}(-4 k-1) $, $\LambertL_{e^{-\frac{2\pi }{3}\pm\frac{2\pi i }{3}}}(-4 k-1)$, and $\zeta(4k+1)$ in terms of $\LambertL_{-e^{- 3\pi }}(-4 k-1)$, and $\LambertL_{e^{- 6\pi }}(-4 k-1)$. Applying Lemma\,\ref{LemmaLambert p=p} at $q=e^{-2 \pi}$ with $p=3$ gives
\begin{equation*}\label{p=3}
\begin{aligned}
\sum_{n=-1}^1 \LambertL_{e^{-\frac{2\pi}{3}+i\frac{2\pi n}{3} }}(-4k-1)= (3^{-4k} +3)\LambertL_{e^{-2\pi}}(-4k-1)- 3^{-4k}\LambertL_{e^{-6\pi}}(-4k-1).
\end{aligned}
\end{equation*}
Theorem\,\ref{TheoremZeta4k+1p=2} supplies the fifth equation that relates $\LambertL_{e^{- 2\pi }}(-4 k-1)$ to $\LambertL_{e^{- 4\pi }}(-4 k-1)$ and $\zeta(4k+1)$.

Using these five equations to eliminate $\LambertL_{e^{-\frac{2\pi }{3}}}(-4 k-1) $, $\LambertL_{e^{-\frac{2\pi }{3}\pm\frac{2\pi i }{3}}}(-4 k-1)$, and $\LambertL_{e^{- 2\pi }}(-4 k-1)$ and solving for $\zeta(4k+1)$ in terms of $\LambertL_{-e^{- 3\pi }}(-4 k-1)$, $\LambertL_{e^{- 4\pi }}(-4 k-1)$, and $\LambertL_{e^{- 6\pi }}(-4 k-1)$ completes the proof of the theorem.
\end{proof}

\begin{theorem}\label{TheoremZeta4k+1p=5}
The Riemann zeta function at odd integers $4k+1$ other than unity can be written in terms of Lambert series at $q=e^{-4 \pi}$, $-e^{-5 \pi},$ and $e^{-10 \pi}$ as\emph{:}
\begin{equation*}
\begin{aligned}
\zeta(4k+1)=&\frac{(2\pi)^{4k+1}}{2b_k}\sum _{j=0}^k \frac{(-1)^{j+1} c_{jk}\, B_{2 j} B_{4 k+2-2 j} }{(2j)!(4 k+2-2 j)!}-\frac{1}{2^{4 k-1} b_k}\LambertL_{e^{-4 \pi }}(-4 k-1)\\
&+\frac{(-1)^k 2^{2 k+1} a_k }{b_k}\LambertL_{-e^{-5 \pi }}(-4 k-1)+\frac{2a_k }{b_k}\LambertL_{e^{-10 \pi }}(-4 k-1),\\
\end{aligned}
\end{equation*}
where
\begin{equation*}
\begin{aligned}
a_k&=\frac{2^{4 k+1}-(-1)^k 2^{2 k}+1}{2^{4 k} \left[5^{4 k+1}-2\times 5^{2 k} \cos (4 k \tan^{-1}2)+1\right]},\\
b_k&=\frac{ a_k }{2} \bigg[5^{4 k+1}-\sum _{n=-2}^2 (1+i n)^{4 k}\bigg]-\frac{2^{4 k+1}-(-1)^k 2^{2 k}-1}{2^{4 k+1}},\\
c_{jk}&=a_k \bigg[5^{2 j-1}\! \sum _{n=-2}^2 (1+i n)^{4 k+1-2 j}-5^{4 k+1-2 j} \!\sum _{n=-2}^2 (1+i n)^{2 j-1}\bigg]\\
&\hspace{0.175in}-\bigg[\frac{1+(1+i)^{4 k+1-2 j}}{2^{4 k+1-2 j}}-\frac{1+(1+i)^{2 j-1}}{2^{2 j-1}}\bigg].\\
\end{aligned}
\end{equation*}
\end{theorem}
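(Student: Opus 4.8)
The plan is to run the same elimination scheme used for $p=2$ and $p=3$, now at the prime $p=5$. First I would apply Theorem~\ref{TheoremOddLambert}(3) at the five arguments $t=\tfrac{1}{5}$, $t=\tfrac{1}{5}(1\pm i)$, and $t=\tfrac{1}{5}(1\pm 2i)$. A short computation of $1/t$ gives the values $5$, $\tfrac{5}{2}(1\mp i)$, and $(1\mp 2i)$, so the ``$1/t$'' side of the functional equation produces exactly $\LambertL_{e^{-10\pi}}(-4k-1)$, then $\LambertL_{-e^{-5\pi}}(-4k-1)$ (since $e^{\pm 5\pi i}=-1$), and finally $\LambertL_{e^{-2\pi}}(-4k-1)$ (since $e^{\pm 4\pi i}=1$); meanwhile the ``$t$'' side produces the five Lambert series at the fifth-root points $q^{1/5}\omega^{n}$, $q=e^{-2\pi}$, $\omega=e^{2\pi i/5}$. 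Solving each of the five equations for its root-of-unity term gives
\[
\LambertL_{e^{-2\pi t}}(-4k-1)=t^{4k}\LambertL_{e^{-2\pi/t}}(-4k-1)+t^{2k}R(t),
\]
where $R(t)$ is the Bernoulli sum plus $\zeta(4k+1)\sinh(2k\log t)$ appearing on the right of Theorem~\ref{TheoremOddLambert}(3).

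Next I would feed these five expressions into Lemma~\ref{LemmaLambert p=p} at $q=e^{-2\pi}$ with $p=5$ (in the symmetric range $-2\le n\le 2$), whose left side is precisely $\sum_{n}\LambertL_{e^{-2\pi/5}\omega^{n}}(-4k-1)$ and whose right side is $(5^{-4k}+5)\LambertL_{e^{-2\pi}}(-4k-1)-5^{-4k}\LambertL_{e^{-10\pi}}(-4k-1)$. The two contributions involving $\LambertL_{e^{-2\pi}}(-4k-1)$ --- one from the $n=\pm 2$ terms of the substitution, one from the right side of the Lemma --- are then collected. Here the conjugate pairs collapse to real coefficients via $(1+i)^{4k}+(1-i)^{4k}=(-1)^{k}2^{2k+1}$ and $(1+2i)^{4k}+(1-2i)^{4k}=2\cdot 5^{2k}\cos(4k\tan^{-1}2)$, which is exactly where the factor $5^{2k}\cos(4k\tan^{-1}2)$ in $a_k$ and $b_k$ originates. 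This yields a single equation expressing $\LambertL_{e^{-2\pi}}(-4k-1)$ in terms of $\LambertL_{-e^{-5\pi}}(-4k-1)$, $\LambertL_{e^{-10\pi}}(-4k-1)$, $\zeta(4k+1)$, and a Bernoulli double sum.

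To close the system I would invoke Theorem~\ref{TheoremZeta4k+1p=2}, which supplies a second, independent relation among $\LambertL_{e^{-2\pi}}(-4k-1)$, $\LambertL_{e^{-4\pi}}(-4k-1)$, and $\zeta(4k+1)$. Eliminating $\LambertL_{e^{-2\pi}}(-4k-1)$ between the two relations and solving for $\zeta(4k+1)$ gives the stated formula, with the Lambert series surviving only at $q=e^{-4\pi},\,-e^{-5\pi},\,e^{-10\pi}$.

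The main obstacle is purely the bookkeeping of the explicit coefficients. The identity $\sinh(m\log t)=\tfrac12(t^{m}-t^{-m})$ turns each $t^{2k}R(t)$ into $\tfrac12(t^{4k+1-2j}-t^{2j-1})$ in its Bernoulli part and $\tfrac12(t^{4k}-1)$ in its $\zeta$ part, so the sums over $n$ reduce to $\sum_{n=-2}^{2}(1+in)^{4k+1-2j}$, $\sum_{n}(1+in)^{2j-1}$, and $\sum_{n}(1+in)^{4k}$ times powers of $5$. Matching these --- together with the $\zeta$ contribution inherited from Theorem~\ref{TheoremZeta4k+1p=2} --- to the compact closed forms $a_k$, $b_k$, $c_{jk}$ in the statement is the step that requires care, but it involves no new ideas beyond those already used for $p=2$ and $p=3$.
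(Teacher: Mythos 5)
Your proposal is correct and follows exactly the paper's own route: the same five substitutions $t=\tfrac{1}{5},\,\tfrac{1}{5}(1\pm i),\,\tfrac{1}{5}(1\pm 2i)$ into Theorem~\ref{TheoremOddLambert}(3), Lemma~\ref{LemmaLambert p=p} at $q=e^{-2\pi}$ with $p=5$ in the symmetric range, and Theorem~\ref{TheoremZeta4k+1p=2} as the closing relation used to eliminate $\LambertL_{e^{-2\pi}}(-4k-1)$. In fact your write-up supplies more detail than the paper's proof (which only says the logic mirrors the $p=3$ case), correctly working out the $1/t$ values, the reappearance of $\LambertL_{e^{-2\pi}}(-4k-1)$ from the $n=\pm 2$ substitutions, and the conjugate-pair identities that produce the $5^{2k}\cos(4k\tan^{-1}2)$ factor in $a_k$ and $b_k$.
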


\begin{remark}This formula has a convergence rate of about 5.45 digits per term and is our best result for $\zeta(4k+1)$. The first few examples of this theorem are listed in the Appendix.
\end{remark}

\begin{proof}
The proof of this theorem is very similar to that of Theorem\,\ref{TheoremZeta4k+1p=3}.  Theorem\,\ref{TheoremOddLambert}(3) is now applied at $t=\frac{1}{5},\frac{1}{5}\pm\frac{i}{5}$, and $\frac{1}{5}\pm\frac{2i}{5}$ and Lemma\,\ref{LemmaLambert p=p} is applied at $q=e^{-2\pi}$ with $p=5$ instead of $p=3$. Rest of the logic remains the same.
\end{proof}

\section{Additional formulae for $\zeta(4k+1)$}\label{SectionMoreZeta(4k+1)}
In the previous section we provided formulae for $\zeta(4k+1)$ in terms of Lambert series at $q$ values of the form $e^{-2^n\! p \pi}$ where $p$ was a prime equal to $2,3,$ or $5$  and $n$ was $0,1,$ or $2$. In this section we provide an additional class of formulae for $\zeta(4k+1)$ using Lambert series with $q$ values of the form $e^{-2^n \! \sqrt{m} \pi}$ where $m$ is equal to $3,7,$ or $15$ and $n$ is $0,1,$ or $2$.

First we state the following two lemmas on which the theorems in this section are based. Both lemmas combine two Lambert series to produce a series with a stronger convergence rate per term.

\begin{lemma} \label{LemmaLambert p=4}
For any $s\in \mathbb{C}$ the Lambert series at $x=1$ satisfies\emph{:}
\begin{equation*}
\begin{aligned}
\LambertL_{i\sqrt{q}}(s) +\LambertL_{-i\sqrt{q}}(s)
&=-(2^{s+1}\! + 2)\, \LambertL_{q}(s)+(2^{2s+2}\!+3\times 2^{s+1}\!+4)\,\LambertL_{q^2}(s)\\
&\hspace{0.158in}-(2^{2s+2}\! + 2^{s+2})\,\LambertL_{q^4}(s).\\
\end{aligned}
\end{equation*}
\end{lemma}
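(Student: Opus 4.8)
The plan is to relate all four Lambert series on the right-hand side back to a divisor-sum expansion, exactly as in the proof of Lemma~\ref{LemmaLambert p=p}, and then observe that the left-hand side picks out a specific arithmetic combination of the divisor function that can be matched coefficient-by-coefficient. Concretely, I would write $\LambertL_{i\sqrt{q}}(s)+\LambertL_{-i\sqrt{q}}(s)$ using \eqref{divisor} as
\begin{equation*}
\sum_{k=1}^\infty \sigma_s(k)\,q^{k/2}\bigl[i^k+(-i)^k\bigr],
\end{equation*}
and note that the bracket vanishes unless $k$ is even, in which case $i^k+(-i)^k=2(-1)^{k/2}$. Setting $k=2l$ collapses the half-integer powers of $q$ to integer powers, leaving
\begin{equation*}
\LambertL_{i\sqrt q}(s)+\LambertL_{-i\sqrt q}(s)=2\sum_{l=1}^\infty(-1)^l\,\sigma_s(2l)\,q^{l}.
\end{equation*}

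**Next I would** expand $\sigma_s(2l)$ using the same $p=2$ multiplicativity identity invoked in Lemma~\ref{LemmaLambert p=p}, namely $\sigma_s(2l)=(2^s+1)\sigma_s(l)-2^s\sigma_s(l/2)$, and then further split the factor $(-1)^l$ according to the $2$-adic valuation of $l$. The idea is that each right-hand Lambert series $\LambertL_{q^{2^n}}(s)=\sum_l \sigma_s(l)q^{2^n l}$ contributes $\sigma_s(m/2^n)$ to the coefficient of $q^m$, so the whole right-hand side is a linear combination of $\sigma_s(m)$, $\sigma_s(m/2)$, $\sigma_s(m/4)$ with the stated constant coefficients. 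I would therefore compute, for a fixed $m$ with $2$-adic valuation $v=v_2(m)$, the coefficient of $q^m$ on both sides and verify they agree for every $v\ge 0$. This reduces the identity to checking finitely many cases ($v=0$, $v=1$, $v=2$, and $v\ge 3$), using the recursion $\sigma_s(m)=(2^s+1)\sigma_s(m/2)-2^s\sigma_s(m/4)$ repeatedly to re-express everything in terms of the odd part of~$m$.

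**The main obstacle** will be bookkeeping the sign $(-1)^l$ correctly through the valuation cases: on the left the coefficient of $q^m$ is $2(-1)^m\sigma_s(2m)$, and tracking how the alternating sign interacts with the telescoping $\sigma_s$ recursion is where the constants $2^{2s+2}+3\cdot 2^{s+1}+4$ and $2^{2s+2}+2^{s+2}$ must emerge exactly rather than approximately. I expect the cleanest route is to treat the generating function identity directly: write $A(q)=\LambertL_{i\sqrt q}(s)+\LambertL_{-i\sqrt q}(s)$ and express both $A(q)$ and the right-hand side as rational combinations of the ``building block'' series $\sum_l\sigma_s(l)q^{al}$, then match. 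Since each side is manifestly a $q$-series with coefficients built from $\sigma_s$, establishing equality of the coefficient of $q^m$ for all $m$ suffices, and the finitely many valuation cases make this a routine (if sign-delicate) verification. **Finally I would** note that convergence and rearrangement of the double sums are justified for $\abs q<1$ exactly as in Lemma~\ref{LemmaLambert p=p}, so no analytic subtleties arise beyond absolute convergence.
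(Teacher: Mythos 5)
Your proposal is correct, but it takes a genuinely different route from the paper. The paper's proof is a three-line algebraic elimination: it applies Lemma~\ref{LemmaLambert p=p} with $p=2$ twice as a black box --- once with $q$ replaced by $-q$, giving $\LambertL_{i\sqrt q}(s)+\LambertL_{-i\sqrt q}(s)=(2^{s+1}+2)\LambertL_{-q}(s)-2^{s+1}\LambertL_{q^2}(s)$, and once with $q$ replaced by $q^2$, giving $\LambertL_{q}(s)+\LambertL_{-q}(s)=(2^{s+1}+2)\LambertL_{q^2}(s)-2^{s+1}\LambertL_{q^4}(s)$ --- and then eliminates $\LambertL_{-q}(s)$ between the two; the stated constants fall out as $(2^{s+1}+2)^2-2^{s+1}$ and $2^{s+1}(2^{s+1}+2)$. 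You instead go back down to the level of $q$-series coefficients: your identity $\LambertL_{i\sqrt q}(s)+\LambertL_{-i\sqrt q}(s)=2\sum_l(-1)^l\sigma_s(2l)q^l$ is correct, and matching the coefficient of $q^m$ reduces the lemma to the finitely many $2$-adic valuation cases you describe; writing $x=2^s$ and $\sigma_s(2^j)=(x^{j+1}-1)/(x-1)$, the cases $v_2(m)=0$, $v_2(m)=1$, and $v_2(m)\ge 2$ all check out (the $v\ge 2$ case is uniform, so your split into $v=2$ and $v\ge 3$ is unnecessary), with the sign $(-1)^m$ handled by the fact that it is $-1$ exactly when $v=0$. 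What the paper's route buys is brevity and reuse of an already-proved lemma, with no case analysis and no need to know the constants in advance; what your route buys is a self-contained, first-principles verification that also makes transparent \emph{why} only the three series $\LambertL_q,\LambertL_{q^2},\LambertL_{q^4}$ can appear (the recursion for $\sigma_s$ at powers of $2$ has depth two). The one caveat is that your method verifies a given identity rather than deriving it, and the case-checking computation, though routine, is left unexecuted in your sketch; carrying it out is where all the actual work lies.
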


\begin{proof}
Applying Lemma\,\ref{LemmaLambert p=p} at $p=2$ and replacing $q$ with $-q$ gives
\begin{equation}
\begin{aligned}
\LambertL_{i\sqrt{q}}(s) +\LambertL_{-i\sqrt{q}}(s)= (2^{s+1} +2) \LambertL_{-q}(s)-2^{s+1} \LambertL_{q^2}(s).
\end{aligned}
\end{equation}
Now using Lemma\,\ref{LemmaLambert p=p} at $p=2$ and replacing $q$ with $q^2$ gives
\begin{equation}
\begin{aligned}
\LambertL_{q}(s) +\LambertL_{-q}(s)= (2^{s+1} +2) \LambertL_{q^2}(s)-2^{s+1} \LambertL_{q^4}(s).
\end{aligned}
\end{equation}
Solving for $\LambertL_{-q}(s)$ in the second equation and replacing it in the first equation above completes the proof.
\end{proof}

\begin{lemma} \label{LemmaLambert iq-(-iq)}
For any $s\in \mathbb{C}$ the Lambert series at $x=1$ satisfies\emph{:}
\begin{equation*}
\begin{aligned}
\mathcal{S}_{q}(s)\equiv i\left[\LambertL_{i\sqrt{q}}(s) -\LambertL_{-i\sqrt{q}}(s)\right] =\sum_{n=0}^\infty (-1)^{n+1}(2n+1)^s \textrm{\emph{sech}}\!\left[(n+\tfrac{1}{2})\log q\right].\\
\end{aligned}
\end{equation*}
\end{lemma}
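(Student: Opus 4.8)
The plan is to expand both Lambert series term by term from the definition $\LambertL_q(s)=\sum_{n=1}^\infty n^s q^n/(1-q^n)$ and to exploit the parity of $n$. Writing $a_n=(i\sqrt q)^n=i^n q^{n/2}$, the substitution $q\mapsto -i\sqrt q$ replaces $a_n$ by $(-i)^n q^{n/2}=(-1)^n a_n$, since $(-i)^n=(-1)^n i^n$. Hence in the difference $\LambertL_{i\sqrt q}(s)-\LambertL_{-i\sqrt q}(s)$ every even-index term cancels identically, and only odd $n$ survive.

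For odd $n$, the two fractions combine: $\dfrac{a_n}{1-a_n}-\dfrac{-a_n}{1+a_n}=\dfrac{2a_n}{1-a_n^2}$. Setting $n=2m+1$ gives $a_n=i^{2m+1}q^{m+1/2}=(-1)^m i\,q^{m+1/2}$ and $a_n^2=-q^{2m+1}$, so the denominator becomes $1+q^{2m+1}$. Multiplying by the prefactor $i\,n^s$ and using $i\cdot(-1)^m i=(-1)^{m+1}$, the term indexed by $m$ in $\mathcal S_q(s)$ reduces to $2(-1)^{m+1}(2m+1)^s\,q^{m+1/2}/(1+q^{2m+1})$.

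The final step is to recognize the hyperbolic secant. Writing $q=e^{\log q}$ and factoring $e^{(m+1/2)\log q}$ out of numerator and denominator yields $q^{m+1/2}/(1+q^{2m+1})=\tfrac12\,\text{sech}[(m+\tfrac12)\log q]$, which turns the $m$-th term into exactly $(-1)^{m+1}(2m+1)^s\,\text{sech}[(m+\tfrac12)\log q]$. Relabeling $m$ as $n$ then produces the claimed identity.

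I do not anticipate a genuine obstacle, as the result is a direct manipulation. The only points requiring care are the bookkeeping of the powers of $i$ (in particular verifying $(-i)^n=(-1)^n i^n$ and $i^{2m+1}=(-1)^m i$) and the justification for rearranging terms; the latter is immediate because each Lambert series converges absolutely for $\abs{q}<1$ and the resulting sech series converges even more rapidly.
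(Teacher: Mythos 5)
Your proposal is correct and follows essentially the same route as the paper's own proof: cancel the even-indexed terms, combine the odd-indexed fractions into $2a_n/(1-a_n^2)$, and recognize the hyperbolic secant, with careful tracking of powers of $i$. The only cosmetic difference is that you work with $i\sqrt{q}$ throughout, while the paper manipulates $\LambertL_{\pm iq}(s)$ and substitutes $q \to \sqrt{q}$ at the very end.
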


\begin{proof}
\begin{equation*}
\begin{aligned}
\LambertL_{iq}(s) -\LambertL_{-iq}(s)&= \sum_{n=1}^\infty \frac{n^s (iq)^{n}}{1-(iq)^{n}}-\sum_{n=1}^\infty \frac{n^s (-iq)^{n}}{1-(-iq)^{n}}\\
&= \sum_{m=0}^\infty \frac{(2m+1)^s (iq)^{2m+1}}{1-(iq)^{2m+1}}-\sum_{m=0}^\infty \frac{(2m+1)^s (-iq)^{2m+1}}{1-(-iq)^{2m+1}}\\
&= \sum_{m=0}^\infty \frac{2(2m+1)^s(iq)^{2m+1}}{1-(iq)^{4m+2}}\\
&= -2i\sum_{m=0}^\infty \frac{(-1)^{m+1}(2m+1)^s q^{2m+1}}{1+q^{4m+2}}\\
&= -i\sum_{m=0}^\infty (-1)^{m+1}(2m+1)^s \textrm{sech}\!\left[(2m+1)\log q\right].\\
\end{aligned}
\end{equation*}
Replacing $q$ with $\sqrt{q}$ and multiplying by $i$ on both sides completes the proof.
\end{proof}

Now we state the three formulae for $\zeta(4k+1)$ in increasing order of their convergence rates in the form of the following theorems.

\begin{theorem}\label{TheoremZeta4k+1root3}
The Riemann zeta function at odd integers $4k+1$ other than unity can be written in terms of the Lambert series at $q=-e^{-\sqrt{3} \pi}$ as\emph{:}
\begin{equation*}
\begin{aligned}
\zeta(4k+1)=&\,(2\pi)^{4k+1}\sum _{j=0}^k \frac{(-1)^{j} b_{jk}\, B_{2 j} B_{4 k+2-2 j} }{(2j)!(4 k+2-2 j)!}-a_k\LambertL_{-e^{-\sqrt{3} \pi }}(-4 k-1)\\
\end{aligned}
\end{equation*}
where
\begin{equation*}
\begin{aligned}
a_k&=\frac{2^{4 k+1}+1}{2^{4 k}-\cos \big(\frac{2 \pi  k}{3}\big)},\\
b_{jk}&=\frac{2^{2 j-1} \sin\! \big[\frac{(2 k-1-j)\pi}{3}\big]+2^{4 k+1-2 j} \sin\! \big[\frac{(j+1)\pi}{3}\big]}{2^{4 k}-\cos \big(\frac{2 \pi  k}{3}\big)}.\\
\end{aligned}
\end{equation*}
\end{theorem}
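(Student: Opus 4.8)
The plan is to apply the transformation formula of Theorem~\ref{TheoremOddLambert}(3) at the single unit-modulus node $t=e^{-i\pi/6}=\tfrac{\sqrt3-i}{2}$. The point of this choice is that $|t|=1$ forces $1/t=\bar t$, so that both transformed arguments collapse onto the one real value $e^{-2\pi t}=e^{-2\pi/t}=-e^{-\sqrt3\pi}$, while $\log t=-i\pi/6$ converts every $\sinh$ on the right-hand side into $-i\sin$. First I would substitute this $t$, using $t^{\pm2k}=e^{\mp ik\pi/3}$ so that the left-hand side becomes $2i\sin(k\pi/3)\,\LambertL_{-e^{-\sqrt3\pi}}(-4k-1)$ and the $\zeta$-term becomes $-i\sin(k\pi/3)\,\zeta(4k+1)$. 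Cancelling the common factor $-i$ leaves a single linear relation between $\zeta(4k+1)$ and $\LambertL_{-e^{-\sqrt3\pi}}(-4k-1)$, carrying the prefactor $\sin(k\pi/3)$ and a Bernoulli sum whose terms contain $\sin[\pi(2k+1-2j)/6]$.

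For $k\not\equiv 0\pmod 3$ this prefactor is nonzero, and dividing through immediately yields the stated formula with coefficient $a_k=2$; half-angle and product-to-sum identities recast the ratio $\sin[\pi(2k+1-2j)/6]/\sin(k\pi/3)$ into the two-term sine combination $2^{2j-1}\sin[(2k-1-j)\pi/3]+2^{4k+1-2j}\sin[(j+1)\pi/3]$ displayed in $b_{jk}$. Indeed one checks that $\tfrac{2^{4k+1}+1}{2^{4k}-\cos(2\pi k/3)}=2$ precisely when $\cos(2\pi k/3)=-\tfrac12$, i.e. exactly for $k\not\equiv0\pmod3$, so the closed form reproduces this regime.

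The hard part will be the degenerate case $k\equiv 0\pmod3$, where $\sin(k\pi/3)=0$ makes the prefactor and (as a short computation confirms) the Bernoulli sum both vanish, so the primary node yields only $0=0$ and the single-node argument gives no information. To restore a determinate relation I would bring in the halving Lemma~\ref{LemmaLambert p=p} at $p=2$, $q=-e^{-\sqrt3\pi}$, together with Theorem~\ref{TheoremOddLambert}(3) at the scaled nodes $t=2e^{-i\pi/6}$ and $t=\tfrac12 e^{-i\pi/6}$; these introduce the companion series $\LambertL_{e^{-2\sqrt3\pi}}$ and $\LambertL_{\pm i e^{-\sqrt3\pi/2}}$ and, through the factors $t^{\pm2k}=2^{\pm2k}e^{\mp ik\pi/3}$, supply both the powers of $2$ and the rotation factors $e^{\pm2ik\pi/3}$ that produce $\cos(2\pi k/3)$ in the denominator. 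Collapsing the $\pm i$ pair with Lemma~\ref{LemmaLambert p=4} or Lemma~\ref{LemmaLambert iq-(-iq)} and eliminating the companions should leave $\zeta(4k+1)$ expressed through $\LambertL_{-e^{-\sqrt3\pi}}(-4k-1)$ alone, now with the modified coefficient $\tfrac{2^{4k+1}+1}{2^{4k}-1}$ forced by $\cos(2\pi k/3)=1$. The delicate point I expect to require the most care is checking that this elimination genuinely closes on a single Lambert series in the degenerate regime rather than leaking into $\LambertL_{e^{-4\sqrt3\pi}}$; once that is settled, both regimes package into the uniform $a_k$ and $b_{jk}$, and—exactly as in the proof of Theorem~\ref{TheoremOddLambert}—I would confirm that no spurious error terms survive by comparing against known tabulated values of $\zeta(4k+1)$.
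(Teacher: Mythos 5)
Your first step is precisely the paper's proof: the paper substitutes the conjugate pair $t=(\sqrt{3}\pm i)/2=e^{\pm i\pi/6}$ into Theorem~\ref{TheoremOddLambert}(3), and since those two equations are complex conjugates of one another, your single node $t=e^{-i\pi/6}$ carries the same content. (Your version is in fact more careful than the paper's: literally ``adding'' the conjugate equations gives $0=0$; the information sits in the imaginary part, which is what you extract.) Your reduced relation, the value $a_k=2$ off the degenerate residues, and the identification of $k\equiv 0\pmod{3}$ as the problematic case are all correct.

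The first genuine gap is the claim that half-angle and product-to-sum identities convert $\sin[(2k+1-2j)\pi/6]/\sin(k\pi/3)$ into the stated $b_{jk}$. No such manipulation can work, because the two are not equal term by term: for $k=1$, $j=0$ the ratio is $2/\sqrt{3}\approx 1.155$, while $b_{01}=\left(2^{-1}+2^{5}\right)\sin(\pi/3)/\left(2^{4}+\tfrac{1}{2}\right)=65\sqrt{3}/66\approx 1.706$; likewise $b_{11}=8\sqrt{3}/33$ versus $1/\sqrt{3}$. Only the full Bernoulli sums agree (both equal $11\sqrt{3}/181440$ at $k=1$). So your route still needs a sum-level identity asserting that the difference of the two coefficient vectors annihilates the products $B_{2j}B_{4k+2-2j}/[(2j)!\,(4k+2-2j)!]$. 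That identity is true for $k\not\equiv 0\pmod{3}$, but it is not trigonometry; it encodes a second, independent representation of $\zeta(4k+1)$ coming from the modulus-$2$ nodes $t=\sqrt{3}\pm i$ (which is where the powers of $2$ in $a_k$ and $b_{jk}$ originate), and it requires its own derivation.

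The second gap is the degenerate case, where the leak you flagged is real and, with the paper's toolkit, unfixable. Carrying out your plan: at $t=\sqrt{3}\pm i$ the Lambert arguments are $e^{-2\sqrt{3}\pi}$ and $\pm i e^{-\sqrt{3}\pi/2}$ (the nodes $\tfrac{1}{2}e^{\pm i\pi/6}$ duplicate these by the $t\to 1/t$ antisymmetry, so they add nothing); adding the conjugate equations and removing $\LambertL_{ie^{-\sqrt{3}\pi/2}}+\LambertL_{-ie^{-\sqrt{3}\pi/2}}$ via Lemma~\ref{LemmaLambert p=p} with $p=2$ at $q=-e^{-\sqrt{3}\pi}$ gives, for $k\equiv 0\pmod{3}$,
\begin{equation*}
\zeta(4k+1)=-\frac{2^{4k+1}+1}{2^{4k}-1}\,\LambertL_{-e^{-\sqrt{3}\pi}}(-4k-1)+\frac{3}{2^{4k}-1}\,\LambertL_{e^{-2\sqrt{3}\pi}}(-4k-1)+\big(\text{Bernoulli part}\big).
\end{equation*}
This does reproduce the paper's $a_k$ in that regime, but the $\LambertL_{e^{-2\sqrt{3}\pi}}$ term survives with coefficient $3/(2^{4k}-1)\neq 0$. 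The imaginary-part equation at these nodes cannot remove it: when $k\equiv 0\pmod{3}$ it involves only $\mathcal{S}_{e^{-\sqrt{3}\pi}}(-4k-1)$ and a Bernoulli sum, because the $\zeta$, $\LambertL_{-e^{-\sqrt{3}\pi}}$ and $\LambertL_{e^{-2\sqrt{3}\pi}}$ entries all carry the vanishing factor $\sin(k\pi/3)$. Indeed, a linear combination of the real and imaginary parts kills both $\LambertL_{e^{-2\sqrt{3}\pi}}$ and $\mathcal{S}$ exactly when $\tan^{2}(k\pi/3)=3$, i.e.\ exactly when $k\not\equiv 0\pmod{3}$. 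So for $k=3,6,\dots$ (that is, $\zeta(13),\zeta(25),\dots$) neither your plan nor the paper's two-line argument produces the stated single-series formula; the residual term is of order $10^{-8}$ already at $k=3$, far below what comparison with tabulated values can detect, and numerical agreement is not a proof in any case. This is arguably a gap in the theorem's own proof (the paper silently divides by $\sin(k\pi/3)$), and your write-up should flag the case $k\equiv 0\pmod{3}$ as unresolved rather than assert that the elimination closes.
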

\begin{proof}
Substituting $t=(\sqrt{3}\pm i)/2$ in Theorem\,\ref{TheoremOddLambert}(3) and adding gives one equation in  $\LambertL_{- e^{-\sqrt{3} \pi }}(-4 k-1)$ and $\zeta(4k+1)$. Solving for $\zeta(4k+1)$ yields the final result.
\end{proof}

\begin{theorem}\label{TheoremZeta4k+1root7}
The Riemann zeta function at odd integers $4k+1$ other than unity can be written in terms of the Lambert series at $q=e^{-\sqrt{7}\pi}$, $e^{-2\sqrt{7}\pi}$, and $e^{-4\sqrt{7}\pi}$ as\emph{:}
\begin{equation*}
\begin{aligned}
\zeta(4k+1)=&\,(2\pi)^{4k+1}\sum _{j=0}^k \frac{(-1)^{j} c_{jk}\, B_{2 j} B_{4 k+2-2 j} }{(2j)!(4 k+2-2 j)!}+a_k\LambertL_{e^{-\sqrt{7} \pi }}(-4 k-1)\\
&+b_k\LambertL_{e^{-2\sqrt{7} \pi }}(-4 k-1)+2^{-4 k} a_k\LambertL_{e^{-4\sqrt{7} \pi }}(-4 k-1),\\
\end{aligned}
\end{equation*}
where
\begin{equation*}
\begin{aligned}
a_k&=\frac{2+2^{-4 k}-2^{-2 k +1} \cos (4 k \theta)}{1-2^{-2 k} \cos (4 k \theta)},\\
b_k&=-\frac{4+3\times 2^{-4 k}+2^{-8 k}-2^{-2k +2} \cos (4 k \theta)-2^{-6 k+ 1} \cos (4k \theta)}{1-2^{-2 k} \cos (4  k \theta)},\\
c_{jk}&=\frac{2^{2k+\frac{1}{2}-j} \cos[ (2  j-1) \theta]-2^{j-\frac{1}{2}} \cos [ (4 k+1-2 j)\theta]}{2^{2 k}- \cos (4 k\theta)},\\
\end{aligned}
\end{equation*}
and $\theta=\cot^{-1}\sqrt{7}$.
\end{theorem}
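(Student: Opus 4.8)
The plan is to follow the architecture of the proof of Theorem~\ref{TheoremZeta4k+1root3}, but now to route the output of Theorem~\ref{TheoremOddLambert}(3) through Lemma~\ref{LemmaLambert p=4}, so that the several Lambert series produced collapse onto the three arguments $q,q^2,q^4$ with $q=e^{-\sqrt7\pi}$. The crucial choice is the pair of substitution points $t_\pm=(\sqrt7\mp i)/4$. A short computation gives $e^{-2\pi t_+}=i\,e^{-\sqrt7\pi/2}=i\sqrt q$ and $e^{-2\pi t_-}=-i\sqrt q$, while in both cases the reciprocal argument is $e^{-2\pi/t_\pm}=-e^{-\sqrt7\pi}=-q$ (here $\lvert t_\pm\rvert^2=\tfrac12$, so $1/t_\pm=2\overline{t_\pm}$). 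Writing $t_\pm=2^{-1/2}e^{\mp i\theta}$ with $\theta=\cot^{-1}\sqrt7$ makes every subsequent simplification transparent.

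Next I would multiply the $t_+$ equation by $t_+^{2k}$ and the $t_-$ equation by $t_-^{2k}$ and add. Since $q$ is real the two weighted equations are complex conjugates of one another, so their sum is real, and its left-hand side is exactly $\LambertL_{i\sqrt q}(-4k-1)+\LambertL_{-i\sqrt q}(-4k-1)-\big(t_+^{4k}+t_-^{4k}\big)\LambertL_{-q}(-4k-1)$, where $t_+^{4k}+t_-^{4k}=2^{1-2k}\cos(4k\theta)$. Now Lemma~\ref{LemmaLambert p=4} rewrites the first two terms in terms of $\LambertL_q,\LambertL_{q^2},\LambertL_{q^4}$ at $s=-4k-1$, and the intermediate identity $\LambertL_q+\LambertL_{-q}=(2^{s+1}+2)\LambertL_{q^2}-2^{s+1}\LambertL_{q^4}$ from that lemma's proof eliminates the leftover $\LambertL_{-q}$. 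The whole left-hand side is then a linear combination of the three target series at positive argument.

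On the right-hand side the $\zeta(4k+1)$ contributions combine, via $t^{2k}\sinh(2k\log t)=\tfrac12(t^{4k}-1)$, into the factor $2^{-2k}\cos(4k\theta)-1$, whose negative $1-2^{-2k}\cos(4k\theta)$ becomes the common denominator of $a_k,b_k,c_{jk}$. The Bernoulli terms combine through $t^{2k}\sinh\!\big[(2k+1-2j)\log t\big]=\tfrac12\big(t^{4k+1-2j}-t^{2j-1}\big)$ together with $t_+^{p}+t_-^{p}=2^{1-p/2}\cos(p\theta)$, which yields the stated $c_{jk}$ once a factor $-2^{2k}$ is extracted (this is why the $c_{jk}$ denominator appears as $2^{2k}-\cos(4k\theta)$). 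Solving for $\zeta(4k+1)$ and reading off the coefficients of $\LambertL_q,\LambertL_{q^2},\LambertL_{q^4}$ then produces $a_k,\ b_k,\ 2^{-4k}a_k$ respectively.

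I expect the genuine obstacle to be organizational rather than conceptual: carrying the several powers-of-$2$ bookkeeping terms from Lemma~\ref{LemmaLambert p=4} and from the $\LambertL_{-q}$-elimination correctly through the division by $1-2^{-2k}\cos(4k\theta)$, and in particular verifying that the $\LambertL_{q^4}$ coefficient genuinely collapses to $2^{-4k}$ times the $\LambertL_q$ coefficient rather than forming an independent constant. The trigonometric consolidation is routine once the polar form $t_\pm=2^{-1/2}e^{\mp i\theta}$ is fixed, so no idea beyond Theorem~\ref{TheoremZeta4k+1root3} and the two combining lemmas is needed.
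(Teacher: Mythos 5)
Your proposal is correct and is essentially the paper's own proof: the same substitution points $t=(\sqrt7\pm i)/4$ in Theorem~\ref{TheoremOddLambert}(3), elimination of $\LambertL_{i\sqrt q}+\LambertL_{-i\sqrt q}$ via Lemma~\ref{LemmaLambert p=4} at $q=e^{-\sqrt7\pi}$, and elimination of $\LambertL_{-q}$ via Lemma~\ref{LemmaLambert p=p} with $p=2$ at $q=e^{-2\sqrt7\pi}$ (your ``intermediate identity'' is exactly that application), followed by solving for $\zeta(4k+1)$. Your weighting of the two equations by $t_\pm^{2k}$ before adding correctly makes explicit what the paper's terse ``adding'' must mean, since only with those weights does the difference $\LambertL_{i\sqrt q}-\LambertL_{-i\sqrt q}$ drop out, consistent with the absence of an $\mathcal{S}$ term in the stated theorem.
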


\begin{proof}
Substituting $t=(\sqrt{7}\pm i)/4$ in Theorem\,\ref{TheoremOddLambert}(3) and adding gives one equation in $\LambertL_{\pm i e^{-{\sqrt{7} \pi}/{2} }}(-4 k-1)$, $\LambertL_{- e^{-\sqrt{7} \pi }}(-4 k-1)$, and $\zeta(4k+1)$. Eliminating the sum of $\LambertL_{\pm i e^{-{\sqrt{7} \pi}/{2} }}(-4 k-1)$ using Lemma\,\ref{LemmaLambert p=4} at $q=e^{-\sqrt{7} \pi }$, and eliminating $\LambertL_{- e^{-\sqrt{7} \pi }}(-4 k-1)$ using Lemma\,\ref{LemmaLambert p=p} for $p=2$ at $q=e^{-2\sqrt{7} \pi }$ yields the final result.
\end{proof}

\begin{theorem}\label{TheoremZeta4k+1root15}
The Riemann zeta function at odd integers $4k+1$ other than unity can be written in terms of the Lambert series at $q=e^{-\sqrt{15} \pi}$, $e^{-2\sqrt{15} \pi},$ and $ e^{-4\sqrt{15} \pi}$, and the series  $\mathcal{S}_{e^{-\sqrt{15} \pi }}(-4 k-1)$ as\emph{:}
\begin{equation*}
\begin{aligned}
\zeta(4k+1)=&\,(2\pi)^{4k+1}\sum _{j=0}^k \frac{(-1)^{j} c_{jk}\, B_{2 j} B_{4 k+2-2 j} }{(2j)!(4 k+2-2 j)!}+\cot (2 k \theta)\, \mathcal{S}_{e^{-\sqrt{15} \pi }}(-4 k-1)\\
& +\left(2^{-4 k}+2\right)\LambertL_{e^{-\sqrt{15} \pi }}(-4 k-1)\\
&-\left( 2^{-8 k}+3\times 2^{-4 k}+4\right)\LambertL_{e^{-2\sqrt{15} \pi }}(-4 k-1)\\
&+ \left(2^{-8 k}+2^{-4 k+1}\right)\LambertL_{e^{-4\sqrt{15} \pi }}(-4 k-1),\\
\end{aligned}
\end{equation*}
where
\begin{equation*}
\begin{aligned}
c_{jk}&=\csc (2 k \theta) \sin [(2 k\!+\!1\!-\!2 j)\,\theta],\\
\end{aligned}
\end{equation*}
and $\theta=\cot^{-1}\sqrt{15}$.
\end{theorem}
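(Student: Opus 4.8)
The plan is to follow the same strategy as in Theorems \ref{TheoremZeta4k+1root3} and \ref{TheoremZeta4k+1root7}, choosing a value of $t$ on the unit circle so that the $t\to 1/t$ symmetry of Theorem \ref{TheoremOddLambert}(3) becomes complex conjugation. Concretely, I would substitute $t=(\sqrt{15}\pm i)/4$ into Theorem \ref{TheoremOddLambert}(3). Since $\abs{t}^2=(15+1)/16=1$, we have $1/t=\bar t$, and a short computation gives $e^{-2\pi t}=\mp i\,e^{-\sqrt{15}\pi/2}$ and $e^{-2\pi/t}=\pm i\,e^{-\sqrt{15}\pi/2}$. Writing $\theta=\cot^{-1}\sqrt{15}$, one checks $\cos\theta=\sqrt{15}/4$ and $\sin\theta=1/4$, so that $\log t=\pm i\theta$. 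The two conjugate choices of $t$ produce the same relation (both sides merely change sign under $\theta\to-\theta$), so this yields a single equation linking $\LambertL_{i\sqrt q}(-4k-1)$ and $\LambertL_{-i\sqrt q}(-4k-1)$, with $q=e^{-\sqrt{15}\pi}$, to $\zeta(4k+1)$.

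With $\log t=i\theta$, the hyperbolic functions on the right-hand side of Theorem \ref{TheoremOddLambert}(3) turn into $\sinh[(2k+1-2j)i\theta]=i\sin[(2k+1-2j)\theta]$ and $\sinh(2ki\theta)=i\sin(2k\theta)$, while the left-hand side $t^{-2k}\LambertL_{e^{-2\pi t}}-t^{2k}\LambertL_{e^{-2\pi/t}}$ becomes a combination of $\LambertL_{\pm i\sqrt q}(-4k-1)$ with coefficients $e^{\mp 2ki\theta}$. I would then split this combination into its antisymmetric and symmetric parts: the antisymmetric piece $\LambertL_{i\sqrt q}(-4k-1)-\LambertL_{-i\sqrt q}(-4k-1)$ equals $-i\,\mathcal{S}_{q}(-4k-1)$ by Lemma \ref{LemmaLambert iq-(-iq)}, and the symmetric piece $\LambertL_{i\sqrt q}(-4k-1)+\LambertL_{-i\sqrt q}(-4k-1)$ is rewritten through Lemma \ref{LemmaLambert p=4} at $s=-4k-1$ as a combination of $\LambertL_{q}$, $\LambertL_{q^{2}}$, and $\LambertL_{q^{4}}$, i.e.\ of the three real series $\LambertL_{e^{-\sqrt{15}\pi}}$, $\LambertL_{e^{-2\sqrt{15}\pi}}$, and $\LambertL_{e^{-4\sqrt{15}\pi}}$. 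The coefficients $2^{-4k}+2$, $2^{-8k}+3\cdot 2^{-4k}+4$, and $2^{-8k}+2^{-4k+1}$ appearing in the theorem are precisely those produced by Lemma \ref{LemmaLambert p=4} once $s+1=-4k$ is substituted.

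Finally I would isolate $\zeta(4k+1)$. The overall factor of $i$ cancels from both sides, leaving a real identity in which $\zeta(4k+1)$ multiplies $\sin(2k\theta)$; dividing through by $\sin(2k\theta)$ produces the $\cot(2k\theta)$ coefficient on $\mathcal{S}_{e^{-\sqrt{15}\pi}}(-4k-1)$ and the factor $\csc(2k\theta)$ inside $c_{jk}=\csc(2k\theta)\sin[(2k+1-2j)\theta]$, while the sign $(-1)^{j+1}$ from Theorem \ref{TheoremOddLambert}(3) becomes $(-1)^{j}$ after moving the Bernoulli sum across the equation. The main obstacle I anticipate is bookkeeping rather than conceptual: correctly tracking the factors of $i$ and the two opposite coefficients $e^{\mp 2ki\theta}$ when separating the symmetric and antisymmetric contributions, and verifying that the conjugate pair $t=(\sqrt{15}\pm i)/4$ genuinely collapses to one equation rather than two independent ones. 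Once that separation is carried out cleanly, the appearance of $\mathcal{S}$ alongside the three real Lambert series is forced, and solving for $\zeta(4k+1)$ is immediate.
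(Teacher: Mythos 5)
Your proposal is correct and follows essentially the same route as the paper: substituting $t=(\sqrt{15}\pm i)/4$ into Theorem~\ref{TheoremOddLambert}(3), eliminating the sum $\LambertL_{i\sqrt q}+\LambertL_{-i\sqrt q}$ via Lemma~\ref{LemmaLambert p=4} and the difference via Lemma~\ref{LemmaLambert iq-(-iq)} at $q=e^{-\sqrt{15}\pi}$, then dividing by $\sin(2k\theta)$ to obtain the $\cot(2k\theta)$ and $\csc(2k\theta)$ coefficients. Your observation that the two conjugate substitutions collapse to a single relation (each side merely changes sign under $\theta\to-\theta$) is a slightly more careful statement of what the paper compresses into ``adding,'' and your coefficient bookkeeping with $s+1=-4k$ reproduces the stated formula exactly.
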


\begin{remark}
This result converges almost as rapidly as Theorem\,\ref{TheoremZeta4k+1p=5} at 5.28 decimal digits per term. Some examples of this theorem are listed in the Appendix.
\end{remark}

\begin{proof}
Substituting $t=(\sqrt{15}\pm i)/4$ in Theorem\,\ref{TheoremOddLambert}(3) and adding gives one equation containing both the sum and difference of $\LambertL_{\pm i e^{-{\sqrt{15}\pi}/{2} }}(-4 k-1)$,  and $\zeta(4k+1)$. Eliminating the sum of $\LambertL_{\pm i e^{-{\sqrt{15} \pi}/{2} }}(-4 k-1)$ using Lemma\,\ref{LemmaLambert p=4} at $q=e^{-\sqrt{15} \pi }$, and eliminating the difference of $\LambertL_{\pm i e^{-\sqrt{15} \pi /2 }}(-4 k-1)$ using Lemma\,\ref{LemmaLambert iq-(-iq)} at $q=e^{-\sqrt{15} \pi }$ yields the final result.
\end{proof}

\section{Formulae for $\zeta(4k-1)$}\label{Sectionzeta(4k-1)}
The methods of Section\,\ref{SectionMoreZeta(4k+1)} are mirrored with minor variations to produce similar results for $\zeta(4k-1)$ stated in terms of the three theorems below.
\begin{theorem}\label{TheoremZeta4k-1root3}
The Riemann zeta function at odd integers $4k-1$ can be written in terms of Lambert series at $q=e^{-\sqrt{3}\pi}$, $e^{-2\sqrt{3}\pi}$, and $e^{-4\sqrt{3}\pi}$ as\emph{:}
\begin{equation*}
\begin{aligned}
\zeta(4k-1)=\,&(2\pi)^{4k-1}\sum _{j=0}^k \frac{(-1)^{j+1} b_{jk}\, B_{2 j} B_{4 k-2 j} }{(2j)!(4 k-2 j)!}-\frac{2^{4 k+1}+4}{a_k}\LambertL_{e^{-\sqrt{3} \pi }}(-4 k+1)\\
&-\frac{2^{4 k+2}+2^{-4 k+4}+12+8 \cos \big[\frac{ (2 k-1)\pi }{3} \big]}{a_k}\LambertL_{e^{-2\sqrt{3} \pi }}(-4 k+1)\\
&+\frac{2^{-4 k+4}+8}{a_k}\LambertL_{e^{-4\sqrt{3} \pi }}(-4 k+1),\\
\end{aligned}
\end{equation*}
where
\begin{equation*}
\begin{aligned}
a_k&=2^{4 k}\!+4 \sin\! \big[\tfrac{(4k+1) \pi }{6}\big],\\
b_{jk}&=\frac{2^{4 k+1-2 j} \cos \!\big[\frac{  (2 j-1)\pi}{6}\big]+2^{2 j+1} \cos \big[\frac{(4 k-1-2 j)\pi}{6} \big]}{a_k (1+\delta _{jk})}.\\
\end{aligned}
\end{equation*}
\end{theorem}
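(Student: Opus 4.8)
The plan is to mirror the derivation of Theorem \ref{TheoremZeta4k+1root7}, replacing the $\sinh$-identity Theorem \ref{TheoremOddLambert}(3) by its $\cosh$-counterpart Theorem \ref{TheoremOddLambert}(2). First I would substitute the complex-conjugate pair $t = (\sqrt3\pm i)/4$ into Theorem \ref{TheoremOddLambert}(2). Writing $t = \tfrac12 e^{\pm i\pi/6}$, so that $\log t = -\log 2 \pm i\pi/6$, a direct computation gives $e^{-2\pi t} = \mp i\,e^{-\sqrt3\pi/2}$ and $e^{-2\pi/t} = e^{-2\sqrt3\pi}$. Hence the two substitutions involve the imaginary-axis Lambert series $\LambertL_{\pm i e^{-\sqrt3\pi/2}}(-4k+1)$ together with $\LambertL_{e^{-2\sqrt3\pi}}(-4k+1)$.

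Since the two values of $t$ are conjugate and the remaining constants ($\zeta$, the Bernoulli numbers, and the powers of $2$ coming from $\Re\log t = -\log 2$) are all real, I would form both the sum and the difference of the two equations, equivalently split the single complex relation into its real and imaginary parts, obtaining two real equations. In them the factors $\cosh[(2k-2j)\log t]$ and $\cosh[(2k-1)\log t]$ separate into $\cosh/\sinh$ of integer multiples of $\log 2$ (which generate the powers of $2$ occurring in $a_k$ and $b_{jk}$) times $\cos/\sin$ of the corresponding multiples of $\pi/6$, while the Lambert contributions organize into the symmetric combination $\LambertL_{ie^{-\sqrt3\pi/2}}+\LambertL_{-ie^{-\sqrt3\pi/2}}$, the antisymmetric combination $\mathcal{S}_{e^{-\sqrt3\pi}}(-4k+1) = i[\LambertL_{ie^{-\sqrt3\pi/2}}-\LambertL_{-ie^{-\sqrt3\pi/2}}]$ of Lemma \ref{LemmaLambert iq-(-iq)}, and $\LambertL_{e^{-2\sqrt3\pi}}(-4k+1)$.

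Next I would eliminate the symmetric combination using Lemma \ref{LemmaLambert p=4} at $q = e^{-\sqrt3\pi}$, which rewrites $\LambertL_{ie^{-\sqrt3\pi/2}}+\LambertL_{-ie^{-\sqrt3\pi/2}}$ in terms of $\LambertL_{e^{-\sqrt3\pi}}$, $\LambertL_{e^{-2\sqrt3\pi}}$, and $\LambertL_{e^{-4\sqrt3\pi}}$. Unlike in Theorem \ref{TheoremZeta4k+1root7}, no further Lemma \ref{LemmaLambert p=p} step is required, because here $e^{-2\pi/t} = e^{-2\sqrt3\pi}$ already lies on the positive real axis. What remains is a linear system of two real equations in the two unknowns $\zeta(4k-1)$ and $\mathcal{S}_{e^{-\sqrt3\pi}}(-4k+1)$; solving it by elimination removes $\mathcal{S}_{e^{-\sqrt3\pi}}$ and leaves $\zeta(4k-1)$ expressed through the three positive-$q$ Lambert series, as claimed.

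Finally I would collapse the coefficients. Using $\sinh x\,\sin^2\phi + \cosh x\,\cos^2\phi = 2^{2k-2}+2^{-2k}\cos 2\phi$ with $x=(2k-1)\log 2$ and $\phi=(2k-1)\pi/6$, the determinant of the $2\times2$ system becomes $-\bigl[2^{4k-2}+\cos 2\phi\bigr]=-\tfrac14\bigl[2^{4k}+4\sin\tfrac{(4k+1)\pi}{6}\bigr]=-\tfrac14 a_k$, after which the Lambert-series coefficients and the weights $b_{jk}$ follow by routine phase identities. The main obstacle is exactly this bookkeeping: tracking the interleaving of powers of $2$ with sines and cosines of multiples of $\pi/6$ through the elimination, and verifying that $\mathcal{S}_{e^{-\sqrt3\pi}}$ cancels identically so that the resulting relation is exact rather than merely asymptotic, which is ensured by the exactness already established for Theorem \ref{TheoremOddLambert}(2).
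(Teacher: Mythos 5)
Your route is essentially the paper's: substitute the conjugate pair $t=(\sqrt{3}\pm i)/4$ into Theorem \ref{TheoremOddLambert}(2), reduce the imaginary-argument series $\LambertL_{\pm i e^{-\sqrt{3}\pi/2}}(-4k+1)$ to Lambert series at real $q$, and solve for $\zeta(4k-1)$. But your version is actually the complete one. The paper's proof says only that adding the two equations ``gives one equation'' and that Lemma \ref{LemmaLambert p=p} with $p=2$ then removes $\LambertL_{\pm ie^{-\sqrt{3}\pi/2}}$; read literally this cannot work, because the plain sum contains these series through \emph{both} the symmetric combination and the antisymmetric one $\mathcal{S}_{e^{-\sqrt{3}\pi}}(-4k+1)$, whose coefficient is proportional to $\sin[(2k-1)\pi/6]\neq 0$, and neither Lemma \ref{LemmaLambert p=p} nor Lemma \ref{LemmaLambert p=4} touches the antisymmetric part (that is exactly why Theorem \ref{TheoremZeta4k-1root15} must retain an $\mathcal{S}$ term). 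Your splitting into real and imaginary parts and the $2\times 2$ elimination of $\mathcal{S}$, with determinant a nonzero multiple of $a_k$, is precisely the step the paper leaves unstated, and your phase bookkeeping (in particular $\cos[(2k-1)\pi/3]=\sin[(4k+1)\pi/6]$) is consistent with the stated $a_k$ and $b_{jk}$. Using Lemma \ref{LemmaLambert p=4} at $q=e^{-\sqrt{3}\pi}$ in one stroke rather than the paper's two passes through Lemma \ref{LemmaLambert p=p} (via $\LambertL_{-e^{-\sqrt{3}\pi}}$) is an immaterial difference, since that lemma is assembled from exactly those two steps.

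One warning before you grind through the algebra: carried out correctly, the elimination gives $+\frac{2^{4k+1}+4}{a_k}\LambertL_{e^{-\sqrt{3}\pi}}(-4k+1)$, with sign opposite to the printed statement. This is a slip in the paper, not in your plan: at $k=1$ the printed formula reads $\zeta(3)=\frac{\pi^3\sqrt{3}}{45}-2\LambertL_{e^{-\sqrt{3}\pi}}(-3)-\frac{9}{2}\LambertL_{e^{-2\sqrt{3}\pi}}(-3)+\frac{1}{2}\LambertL_{e^{-4\sqrt{3}\pi}}(-3)\approx 1.18464$, whereas flipping the first Lambert sign gives $1.2020569\ldots=\zeta(3)$. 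The origin of the slip is visible in the coefficient structure: if the result is first written in terms of $\LambertL_{-e^{-\sqrt{3}\pi}}(-4k+1)$, its coefficient is indeed $-\frac{2^{4k+1}+4}{a_k}$, and substituting $\LambertL_{-e^{-\sqrt{3}\pi}}=-\LambertL_{e^{-\sqrt{3}\pi}}+(2^{-4k+2}+2)\LambertL_{e^{-2\sqrt{3}\pi}}-2^{-4k+2}\LambertL_{e^{-4\sqrt{3}\pi}}$ reproduces the printed $\LambertL_{e^{-2\sqrt{3}\pi}}$ and $\LambertL_{e^{-4\sqrt{3}\pi}}$ coefficients exactly while flipping the first one to $+$. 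So trust your derivation rather than the printed sign.
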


\begin{proof}
Substituting $t=(\sqrt{3}\pm i)/4$ in Theorem\,\ref{TheoremOddLambert}(2) and adding the results gives one equation in  $\LambertL_{\pm i e^{-\sqrt{3} \pi/2 }}(-4 k+1)$, $\LambertL_{e^{-2\sqrt{3} \pi }}(-4 k+1)$, and $\zeta(4k-1)$. Eliminating eliminating $\LambertL_{\pm i e^{-\sqrt{3} \pi/2 }}(-4 k+1)$ in favor of $\LambertL_{- e^{-\sqrt{3} \pi }}(-4 k+1)$, $\LambertL_{ e^{-2\sqrt{3} \pi }}(-4 k+1)$, and $\LambertL_{ e^{-4\sqrt{3} \pi }}(-4 k+1)$ using Lemma\,\ref{LemmaLambert p=p} for $p=2$ at $q=e^{-2\sqrt{3} \pi }$ yields the final result.
\end{proof}

\begin{theorem}\label{TheoremZeta4k-1root7}
The Riemann zeta function at odd integers $4k-1$ can be written in terms of Lambert series at $q=e^{-\sqrt{7}\pi}$, $e^{-2\sqrt{7}\pi}$, and $ e^{-4\sqrt{7}\pi}$ as\emph{:}
\begin{equation*}
\begin{aligned}
\zeta(4k-1)=&\,(2\pi)^{4k-1}\sum _{j=0}^k \frac{(-1)^{j+1} c_{jk}\, B_{2 j} B_{4 k-2 j} }{(2j)!(4 k-2 j)!}+\frac{b_k}{a_k}\LambertL_{e^{-\sqrt{7} \pi }}(-4 k+1)\\
&-\frac{(2^{-4 k+2}+2)\, b_k-2^{-2 k+2}}{a_k}\LambertL_{e^{-2\sqrt{7} \pi }}(-4 k+1)\\
&+\frac{2^{-4 k+2} b_k}{a_k}\LambertL_{e^{-4\sqrt{7} \pi }}(-4 k+1),\\
\end{aligned}
\end{equation*}
where
\begin{equation*}
\begin{aligned}
a_k&=2^{2 k}+2 \cos [(4 k\!-\!2)\,\theta],\\
b_k&=2^{2 k+1}+2^{-2 k +2}+2^{\frac{3}{2}} \sin [(4 k\!+\!1)\,\theta]+2 \cos (4 k \theta ),\\
c_{jk}&=\frac{2^{2 k+\frac{1}{2}-j} \cos [(2 j\!-\!1)\,\theta]+2^{j+\frac{1}{2}} \cos [(4 k\!-\!1\!-\!2 j)\,\theta]}{a_k (1+\delta _{jk})},\\
\end{aligned}
\end{equation*}
and $\theta=\cot^{-1}\sqrt{7}$.
\end{theorem}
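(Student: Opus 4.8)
The plan is to mirror the proof of Theorem~\ref{TheoremZeta4k-1root3}, replacing $\sqrt3$ by $\sqrt7$, and to reuse the elimination scheme of the companion result Theorem~\ref{TheoremZeta4k+1root7}. I would evaluate the $\zeta(4k-1)$ functional equation, Theorem~\ref{TheoremOddLambert}(2), at the conjugate pair $t=(\sqrt7\pm i)/4$. Since $\abs{t}=1/\sqrt2$ and $\arg t=\pm\theta$ with $\theta=\cot^{-1}\sqrt7$, one computes $e^{-2\pi t}=\mp i\,e^{-\sqrt7\pi/2}$ and $e^{-2\pi/t}=-e^{-\sqrt7\pi}$, so each substitution relates the complex-argument series $\LambertL_{\pm i e^{-\sqrt7\pi/2}}(-4k+1)$, the real series $\LambertL_{-e^{-\sqrt7\pi}}(-4k+1)$, and $\zeta(4k-1)$. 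The prefactors $t^{2k-1}$ and $t^{-(2k-1)}$, carrying moduli $2^{-(2k-1)/2}$, $2^{(2k-1)/2}$ and arguments $\pm(2k-1)\theta$, together with the terms $\cosh[(2k-2j)\log t]$, feed the powers of $2$ and the angles $(2k-1)\theta$, $(2k-2j)\theta$ into the eventual $a_k,b_k,c_{jk}$.

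The conjugate pair is equivalent to the real and imaginary parts of one complex equation, hence yields two real relations. To these I would adjoin two auxiliary identities: Lemma~\ref{LemmaLambert p=4} at $q=e^{-\sqrt7\pi}$, expressing the sum $\LambertL_{ie^{-\sqrt7\pi/2}}+\LambertL_{-ie^{-\sqrt7\pi/2}}$ through $\LambertL_{e^{-\sqrt7\pi}}$, $\LambertL_{e^{-2\sqrt7\pi}}$, $\LambertL_{e^{-4\sqrt7\pi}}$; and Lemma~\ref{LemmaLambert p=p} with $p=2$ at $q=e^{-2\sqrt7\pi}$, expressing $\LambertL_{-e^{-\sqrt7\pi}}$ through the same three positive-argument series. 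This produces a linear system of four equations in the four unknowns $\LambertL_{ie^{-\sqrt7\pi/2}}$, $\LambertL_{-ie^{-\sqrt7\pi/2}}$, $\LambertL_{-e^{-\sqrt7\pi}}$, and $\zeta(4k-1)$, with the three target series $\LambertL_{e^{-2^n\sqrt7\pi}}$, $n=0,1,2$, and the Bernoulli data treated as constants. Solving it for $\zeta(4k-1)$ is the whole game.

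The main obstacle is that Lemma~\ref{LemmaLambert p=4} controls only the even combination $\LambertL_{ie^{-\sqrt7\pi/2}}+\LambertL_{-ie^{-\sqrt7\pi/2}}$, whereas the odd combination $\LambertL_{ie^{-\sqrt7\pi/2}}-\LambertL_{-ie^{-\sqrt7\pi/2}}$ is, by Lemma~\ref{LemmaLambert iq-(-iq)}, the sech-type series $\mathcal{S}_{e^{-\sqrt7\pi}}(-4k+1)$, which does not reduce to ordinary Lambert series and threatens to persist in the answer, exactly as it does in the $\sqrt{15}$ formula. The point is that, because $\abs{t}\ne1$, the real and imaginary parts of the substituted equation are genuinely independent, so this odd piece can be eliminated between them; this is the structural reason $\sqrt7$ admits a closed form free of $\mathcal{S}$ while $\sqrt{15}$ does not. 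Executing that elimination without error is where the real effort sits.

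It then remains to check that the surviving coefficients collapse to the stated closed forms. The products of the $e^{\pm i(2k-1)\theta}$ prefactors generated during the elimination create the double angle $(4k-2)\theta=2(2k-1)\theta$ appearing in $a_k=2^{2k}+2\cos[(4k-2)\theta]$, as well as the $4k\theta$ and $(4k+1)\theta$ arguments in $b_k$; the factor $(1+\delta_{jk})$ in $c_{jk}$ simply carries over the doubling of the $j=k$ Bernoulli term already built into Theorem~\ref{TheoremOddLambert}(2). Reducing every product of circular functions to the compact single-angle forms of the statement via sum-to-product identities is routine but lengthy, and I would confirm the final coefficients with a symbolic computation, as was evidently done for the analogous theorems.
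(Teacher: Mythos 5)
Your proposal is correct and follows essentially the same route as the paper's own proof: substitute $t=(\sqrt{7}\pm i)/4$ into Theorem~\ref{TheoremOddLambert}(2), eliminate the sum $\LambertL_{ie^{-\sqrt{7}\pi/2}}+\LambertL_{-ie^{-\sqrt{7}\pi/2}}$ via Lemma~\ref{LemmaLambert p=4} at $q=e^{-\sqrt{7}\pi}$, eliminate $\LambertL_{-e^{-\sqrt{7}\pi}}$ via Lemma~\ref{LemmaLambert p=p} with $p=2$ at $q=e^{-2\sqrt{7}\pi}$, and solve for $\zeta(4k-1)$. If anything, your write-up is more explicit than the paper's one-line proof on the crucial point: the paper simply says the two substituted equations are ``added,'' whereas you correctly observe that because $\abs{t}=1/\sqrt{2}\neq 1$ the conjugate pair furnishes two independent real relations (equivalently, a phase-weighted combination is available), which is precisely what permits the sech-type series $\mathcal{S}_{e^{-\sqrt{7}\pi}}(-4k+1)$ to be eliminated here while it necessarily survives in the $\sqrt{15}$ theorems, where $\abs{t}=1$.
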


\begin{remark}
This formula for $\zeta(4k-1)$ has a convergence rate of about 3.6 decimal digits per term which is about 0.9 decimal digits per term faster than that of Corollary\,\ref{CorollaryZeta(4k-1)}. Some examples of this theorem are listed in the Appendix.
\end{remark}

\begin{proof}
Substituting $t=(\sqrt{7}\pm i)/4$ in Theorem\,\ref{TheoremOddLambert}(2) and adding the results gives one equation in $\LambertL_{\pm i e^{-{\sqrt{7} \pi}/{2} }}(-4 k+1)$, $\LambertL_{- e^{-\sqrt{7} \pi }}(-4 k+1)$, and $\zeta(4k-1)$. Eliminating the sum of $\LambertL_{\pm i e^{-{\sqrt{7} \pi}/{2} }}(-4 k+1)$ using Lemma\,\ref{LemmaLambert p=4} at $q=e^{-\sqrt{7} \pi }$, and eliminating $\LambertL_{- e^{-\sqrt{7} \pi }}(-4 k+1)$ using Lemma\,\ref{LemmaLambert p=p} for $p=2$ at $q=e^{-2\sqrt{7} \pi }$ yields the final result.
\end{proof}

\begin{theorem}\label{TheoremZeta4k-1root15}
The Riemann zeta function at odd integers $4k-1$ can be written in terms of Lambert series at $q=e^{-\sqrt{15} \pi}$, $e^{-2\sqrt{15} \pi}$, and $ e^{-4\sqrt{15} \pi}$, and the series $\mathcal{S}_{e^{-\sqrt{15} \pi }}(-4 k+1)$ as\emph{:}
\begin{equation*}
\begin{aligned}
\zeta(4k-1)=&\,(2\pi)^{4k-1}\sum _{j=0}^k \frac{(-1)^{j+1} c_{jk}\, B_{2 j} B_{4 k-2 j} }{(2j)!(4 k-2 j)!}+a_k \, \mathcal{S}_{e^{-\sqrt{15} \pi }}(-4 k+1)\\
& +\left(2^{-4 k+2}+2\right)b_k\LambertL_{e^{-\sqrt{15} \pi }}(-4 k+1)\\
&-\left(2^{-8 k+4}+3\times 2^{-4 k+2}+4\right) b_k\LambertL_{e^{-2\sqrt{15} \pi }}(-4 k+1)\\
&+\left(2^{-8 k+ 4}+2^{-4 k+ 3}\right) b_k\LambertL_{e^{-4\sqrt{15} \pi }}(-4 k+1),\\
\end{aligned}
\end{equation*}
where
\begin{equation*}
\begin{aligned}
a_k&=\frac{\cos [(4 k-1)\theta ]-2 \sin (4 k\theta)}{4 \cos^2[(2 k-1)\theta]},\\
b_k&=\frac{2+2 \cos (4k \theta)+\sin [(4k-1) \theta  ]}{4 \cos^2[(2 k-1)\theta]},\\
c_{jk}&=\frac{\cos [(2 k-2 j)\theta  ]}{(1+\delta _{jk})\cos [(2 k-1)\theta ]},\\
\end{aligned}
\end{equation*}
and $\theta=\cot^{-1}\sqrt{15}$.
\end{theorem}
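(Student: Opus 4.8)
The plan is to mirror the proof of Theorem~\ref{TheoremZeta4k+1root15}, replacing Theorem~\ref{TheoremOddLambert}(3) by its $\zeta(4k-1)$ analogue Theorem~\ref{TheoremOddLambert}(2). I would substitute $t=(\sqrt{15}\pm i)/4$ into Theorem~\ref{TheoremOddLambert}(2). The feature that singles out $\sqrt{15}$ is that $15+1=16=4^2$, so $|t|^2=(15+1)/16=1$ and $t=e^{\pm i\theta}$ lies on the unit circle with $\theta=\cot^{-1}\sqrt{15}$, i.e.\ $\cos\theta=\sqrt{15}/4$ and $\sin\theta=1/4$. Hence the two Lambert arguments are the conjugate pair $e^{-2\pi t}=e^{-\sqrt{15}\pi/2}e^{\mp i\pi/2}=\mp i\,e^{-\sqrt{15}\pi/2}$ and $e^{-2\pi/t}=\pm i\,e^{-\sqrt{15}\pi/2}$; every series that appears is therefore $\LambertL_{\pm i e^{-\sqrt{15}\pi/2}}(-4k+1)$.

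Because Theorem~\ref{TheoremOddLambert}(2) is manifestly invariant under $t\mapsto 1/t$ and its right-hand side depends on $t$ only through $\cosh[(2k-2j)\log t]$ and $\cosh[(2k-1)\log t]$, which collapse to the even functions $\cos[(2k-2j)\theta]$ and $\cos[(2k-1)\theta]$ when $\log t=\pm i\theta$, the two substitutions produce one and the same real equation. Writing $A=\LambertL_{ie^{-\sqrt{15}\pi/2}}(-4k+1)$, $B=\LambertL_{-ie^{-\sqrt{15}\pi/2}}(-4k+1)$ and using $t^{\pm(2k-1)}=e^{\pm i(2k-1)\theta}$, this equation takes the form
\begin{align*}
&\cos[(2k-1)\theta]\,(A+B)+i\sin[(2k-1)\theta]\,(A-B)\\
&\quad=(2\pi)^{4k-1}\sum_{j=0}^{k}\frac{(-1)^{j+1}B_{2j}B_{4k-2j}\cos[(2k-2j)\theta]}{(2j)!\,(4k-2j)!\,(1+\delta_{jk})}-\zeta(4k-1)\cos[(2k-1)\theta],
\end{align*}
so that only the symmetric combination $A+B$ and the antisymmetric combination $i(A-B)$ survive.

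I would then discharge these two combinations using the lemmas built for the purpose. Taking $q=e^{-\sqrt{15}\pi}$, Lemma~\ref{LemmaLambert p=4} rewrites $A+B=\LambertL_{i\sqrt q}(-4k+1)+\LambertL_{-i\sqrt q}(-4k+1)$ as the stated combination of $\LambertL_{e^{-\sqrt{15}\pi}}$, $\LambertL_{e^{-2\sqrt{15}\pi}}$ and $\LambertL_{e^{-4\sqrt{15}\pi}}$, while Lemma~\ref{LemmaLambert iq-(-iq)} identifies $i(A-B)$ with $\mathcal{S}_{e^{-\sqrt{15}\pi}}(-4k+1)$ outright. Substituting both, solving for $\zeta(4k-1)$, and dividing by $\cos[(2k-1)\theta]$ then yields a formula of exactly the stated shape: the Bernoulli terms give $c_{jk}=\cos[(2k-2j)\theta]/\{(1+\delta_{jk})\cos[(2k-1)\theta]\}$, the coefficient of $\mathcal{S}$ is $-\tan[(2k-1)\theta]$, and the three real-argument series inherit minus the coefficients supplied by Lemma~\ref{LemmaLambert p=4}.

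The only real work is the coefficient bookkeeping: one must check that the raw coefficients just produced agree with the compact expressions $a_k$, $b_k$, $c_{jk}$ in the statement, and this is where the special value $\sin\theta=1/4$ (equivalently $4\sin\theta=1$) is indispensable. For the $\mathcal{S}$-coefficient one uses $2\sin(4k\theta)-2\sin[(4k-2)\theta]=4\sin\theta\cos[(4k-1)\theta]$, which under $4\sin\theta=1$ turns $\cos[(4k-1)\theta]-2\sin(4k\theta)$ into $-2\sin[(4k-2)\theta]$, so that the stated $a_k$ equals $-\tan[(2k-1)\theta]$; the companion identity $2\cos[(4k-2)\theta]-2\cos(4k\theta)=4\sin\theta\sin[(4k-1)\theta]$ collapses the stated $b_k$ to $1$, reconciling the Lambert coefficients with Lemma~\ref{LemmaLambert p=4}. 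Carrying out these elementary product-to-sum reductions for each coefficient is the main obstacle, but it is entirely routine once $4\sin\theta=1$ is invoked.
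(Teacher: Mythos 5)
Your proposal is correct and follows essentially the same route as the paper's proof: substitute $t=(\sqrt{15}\pm i)/4$ into Theorem~\ref{TheoremOddLambert}(2), eliminate the sum $\LambertL_{ie^{-\sqrt{15}\pi/2}}(-4k+1)+\LambertL_{-ie^{-\sqrt{15}\pi/2}}(-4k+1)$ via Lemma~\ref{LemmaLambert p=4} and the difference via Lemma~\ref{LemmaLambert iq-(-iq)}, both at $q=e^{-\sqrt{15}\pi}$, then solve for $\zeta(4k-1)$. Your closing verification that, under $4\sin\theta=1$, the stated coefficients collapse to $a_k=-\tan[(2k-1)\theta]$ and $b_k=1$ is also correct (and consistent with the appendix values, e.g.\ $-1/\sqrt{15}$ and $9/4$ for $\zeta(3)$), supplying bookkeeping that the paper's terse proof leaves implicit.
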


\begin{remark}
This formula has a convergence rate of about 5.28 decimal digits per term which is the best of our results for $\zeta(4k-1)$. Some examples of this theorem are listed in the Appendix.
\end{remark}

\begin{proof}
Substituting $t=(\sqrt{15}\pm i)/4$ in Theorem\,\ref{TheoremOddLambert}(2) and adding the results gives one equation containing both the sum and difference of $\LambertL_{\pm i e^{-{\sqrt{15} \pi/{2}}}}(-4 k+1)$,  and $\zeta(4k-1)$. Eliminating the sum of $\LambertL_{\pm i e^{-{\sqrt{15} \pi}/{2} }}(-4 k+1)$ using Lemma\,\ref{LemmaLambert p=4} at $q=e^{-\sqrt{15} \pi }$, and eliminating the difference of $\LambertL_{\pm i e^{-\sqrt{15}\pi / 2 }}(-4 k+1)$ using Lemma\,\ref{LemmaLambert iq-(-iq)} at $q=e^{-\sqrt{15} \pi }$ yields the final result.
\end{proof}

\section{Additional results for the Lambert series}\label{SectionAdditionalLambertResults}

Some other interesting results can be arrived at through the manipulation of results in the earlier sections. For example, from Theorem\,\ref{TheoremOddLambert} we get
\begin{theorem}\label{TheoremIndependentOfZeta}
For any constant $a$ with $Re(a)>0$, the Lambert series $\LambertL_q(s)$ at negative odd integers $s$ satisfies the following\emph{:}
\begin{enumerate}
\item For $s = -(4k+1)$ = -1, -5, -9, \ldots,
\begin{equation*}
\begin{aligned}
&\frac{1}{t^{2 k}}\Big[a^{2 k} \LambertL_{e^{-\frac{2\pi t}{a}}}(-4k-1)-(a^{2 k}+a^{-2 k})\LambertL_{e^{-2\pi t}}(-4k-1)\Big.\\
&\hspace{0.25in}\Big.+a^{-2 k}  \LambertL_{e^{-2\pi a t}}(-4k-1)        \Big]\\
-&t^{2 k}\Big[\Big.a^{2 k} \LambertL_{e^{-\frac{2\pi}{a t}}}(-4k-1)-(a^{2 k}+a^{-2 k})\LambertL_{e^{-\frac{2\pi}{ t}}}(-4k-1)\\
&\hspace{0.25in}\Big.+ a^{-2 k}  \LambertL_{e^{-\frac{2\pi a}{t}}}(-4k-1)         \Big]\\
=&\,(2 \pi )^{4 k+1}\sum _{j=0}^{k}\frac{(-1)^{j+1} b_{jk}(a) B_{2 j} B_{4 k+2-2 j} \sinh [(2 k+1-2 j) \log t]}{(2 j)!\, (4 k+2-2 j)!}.\\
\end{aligned}
\end{equation*}

\item For $s = -(4k-1)$ = -3, -7, -11, \ldots,
\begin{equation*}
\begin{aligned}
&\frac{1}{t^{2 k-1}}\Big[a^{2 k-1} \LambertL_{e^{-\frac{2\pi t}{a}}}(-4k+1)-(a^{2 k-1}+a^{-2 k+1})\LambertL_{e^{-2\pi t}}(-4k+1)\Big.\\
&\hspace{.37in}\Big.+{a^{-2 k+1}}  \LambertL_{e^{-2\pi a t}}(-4k+1)        \Big]\\
+&\,t^{2 k-1}\Big[\Big.a^{2 k-1} \LambertL_{e^{-\frac{2\pi}{a t}}}(-4k+1)-(a^{2 k-1}+a^{-2 k+1})\LambertL_{e^{-\frac{2\pi}{ t}}}(-4k+1)\\
&\hspace{0.37in}\Big.+ a^{-2 k+1}  \LambertL_{e^{-\frac{2\pi a}{t}}}(-4k+1)         \Big]\\
=&\,(2 \pi )^{4 k-1}\sum _{j=0}^{k}\frac{(-1)^j c_{jk}(a) B_{2 j} B_{4 k-2 j} \cosh [(2 k-2 j) \log t]}{(2 j)!\, (4 k-2 j)!},\\
\end{aligned}
\end{equation*}

\end{enumerate}
\hspace{0.25 in} where
\begin{equation*}
b_{jk}(a)=a^{2 k}+a^{-2 k}-a^{2k+1-2j}-a^{-2k-1+2j},
\end{equation*}
\hspace{0.25 in} and
\begin{equation*}
c_{jk}(a)=\frac{a^{2 k-1}+a^{-2 k+1}-a^{2 k-2j}-a^{-2 k+2j}}{(1+\delta_{jk})}.
\end{equation*}
\end{theorem}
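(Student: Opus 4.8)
The plan is to derive both identities by substituting Theorem~\ref{TheoremOddLambert} into a single three-term linear combination in the dilation variable $t$, chosen so that the $\zeta$-contribution is annihilated while the Bernoulli sum survives. I will carry out part~(1) in detail; part~(2) is word-for-word the same with $\sinh$ replaced by $\cosh$ and the exponent $2k$ replaced by $2k-1$.

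First I would abbreviate $G(t):=\frac{1}{t^{2k}}\LambertL_{e^{-2\pi t}}(-4k-1)-t^{2k}\LambertL_{e^{-2\pi/t}}(-4k-1)$, the left side of Theorem~\ref{TheoremOddLambert}(3), so that $G(t)$ equals the Bernoulli sum of that theorem plus $\zeta(4k+1)\sinh(2k\log t)$. Evaluating $G$ at the three points $t/a$, $t$, $at$ and expanding the prefactors $(t/a)^{\mp 2k}=a^{\pm2k}t^{\mp2k}$ and $(at)^{\mp2k}=a^{\mp2k}t^{\mp2k}$, a direct matching of the six resulting Lambert series shows that the entire left-hand side of part~(1) is exactly $G(t/a)-(a^{2k}+a^{-2k})\,G(t)+G(at)$. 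Because $\mathrm{Re}(a)>0$ and $t>0$, each of $t/a$, $at$ and their reciprocals lies in the right half-plane, so every modulus stays below $1$ and Theorem~\ref{TheoremOddLambert}(3), valid by analytic continuation in the half-plane $\mathrm{Re}(t)>0$, may be applied at these complex arguments.

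The crux is that this particular combination acts as a frequency filter on the right-hand side. Writing $u=\log t$, $v=\log a$, the operator $f\mapsto f(u-v)-2\cosh(\omega_0 v)\,f(u)+f(u+v)$ with $\omega_0=2k$ has symbol $2\cosh(\omega v)-2\cosh(\omega_0 v)$ on a pure frequency $\sinh(\omega u)$, since $a^{2k}+a^{-2k}=2\cosh(2kv)$ and the two terms $\sinh(\omega(u\pm v))$ add to $2\sinh(\omega u)\cosh(\omega v)$. At the $\zeta$-frequency $\omega=\omega_0=2k$ the symbol vanishes, so the $\zeta(4k+1)$ term disappears; at each Bernoulli frequency $\omega=2k+1-2j$ the symbol equals $-b_{jk}(a)$, with $b_{jk}(a)=2\cosh(2kv)-2\cosh((2k+1-2j)v)=a^{2k}+a^{-2k}-a^{2k+1-2j}-a^{-2k-1+2j}$, which reassembles the displayed sum. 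For part~(2) I would start from Theorem~\ref{TheoremOddLambert}(2), take coefficients $1,-(a^{2k-1}+a^{-2k+1}),1$, and use $\cosh(\omega(u-v))+\cosh(\omega(u+v))=2\cosh(\omega u)\cosh(\omega v)$; now $\omega_0=2k-1$ matches $\tfrac12(a^{2k-1}+a^{-2k+1})$ to kill $\zeta(4k-1)$, and each $\cosh$-factor yields $-(1+\delta_{jk})c_{jk}(a)$, the $(1+\delta_{jk})$ cancelling the one built into $c_{jk}(a)$.

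I expect the main obstacle to be organizational rather than conceptual: one must verify with care that the prefactors $1/t^{2k}$ and $t^{2k}$ in the statement distribute so that the six displayed Lambert series genuinely coincide with the three evaluations $G(t/a)$, $G(t)$, $G(at)$ of one function, and then track the sign produced by $\cosh(\omega v)-\cosh(\omega_0 v)=-\tfrac12 b_{jk}(a)$ through the $(-1)^{j+1}$ already present in Theorem~\ref{TheoremOddLambert} to land on the displayed signs. Once that identification is pinned down, the vanishing of the $\zeta$ term and the precise form of $b_{jk}(a)$ and $c_{jk}(a)$ are forced by the two hyperbolic addition formulae.
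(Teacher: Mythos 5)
Your route is the same as the paper's: evaluate Theorem~\ref{TheoremOddLambert} at $t/a$, $t$, and $at$, and take the three-term combination with middle coefficient $-(a^{2k}+a^{-2k})$ (resp.\ $-(a^{2k-1}+a^{-2k+1})$) so that the hyperbolic addition formulae annihilate the $\zeta$ term; your identification of the six Lambert series with $G(t/a)-(a^{2k}+a^{-2k})G(t)+G(at)$ is exact, and your symbol computation is correct. What is not correct is your closing assertion that the bookkeeping ``lands on the displayed signs.'' In part~(1), multiplying the coefficient $(-1)^{j+1}$ from Theorem~\ref{TheoremOddLambert}(3) by your symbol $2\cosh(\omega v)-2\cosh(2kv)=-b_{jk}(a)$ yields $(-1)^{j}b_{jk}(a)$, the \emph{negative} of each printed term, whereas the identical bookkeeping in part~(2) yields $(-1)^{j}c_{jk}(a)$, which \emph{does} match the print. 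Since the two parts are structurally parallel, your derivation cannot agree with both printed signs at once; in fact it is the printed part~(1) that carries a typo, and your calculation is the correct one. You can confirm this at $k=0$: Theorem~\ref{TheoremOddLambert}(1) gives $G_0(\tau)=\tfrac12\log\tau-\tfrac{\pi}{6}\sinh(\log\tau)$, and the combination $G_0(t/a)-2G_0(t)+G_0(at)$ equals $+\tfrac{\pi}{6}b_{00}(a)\sinh(\log t)$ (for instance $-\pi/16$ at $a=t=2$, where $b_{00}(2)=-\tfrac12$), which is exactly the sign needed to reproduce the paper's own example $\pi=72\,\LambertL_{e^{-\pi}}(-1)-96\,\LambertL_{e^{-2\pi}}(-1)+24\,\LambertL_{e^{-4\pi}}(-1)$; a numerical check at $k=1$, $a=t=2$ confirms the same. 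So the substance of your proof is right, but a careful execution of your own plan should have ended with ``the printed sign in part~(1) is an erratum: $(-1)^{j+1}$ must read $(-1)^{j}$,'' not with a claim of agreement.

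A second, smaller gap: part~(1) explicitly includes $s=-1$, i.e.\ $k=0$, and there your argument does not apply as written, because Theorem~\ref{TheoremOddLambert}(3) is unavailable (its zeta term would involve $\zeta(1)$). For that case one must instead start from Theorem~\ref{TheoremOddLambert}(1) and observe that the same three-term combination also kills the $\tfrac12\log\tau$ term, since $\log(t/a)-2\log t+\log(at)=0$; this is precisely the extra remark the paper's proof makes and yours omits.
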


\begin{proof}
The proof follows from making the substitutions $t\rightarrow at$ and $t\rightarrow t/a$ in Theorem\,\ref{TheoremOddLambert} and then combining them with the original result in Theorem\,\ref{TheoremOddLambert} in the manner stated in the theorem above. The term containing the zeta function at odd arguments, $\zeta(4k\pm1)$, is eliminated. The special case of $s=-1$ is included as the $k=0$ case in $s=-(4k+1)$. The $\log t$ term is eliminated in this case.
\end{proof}

As examples of Theorem\,\ref{TheoremIndependentOfZeta}, odd powers of $\pi$ can be calculated in terms of the Lambert series at desired $q$ values as stated below.
\begin{example}\label{ExamplePi^(4k+1)}
By choosing $t=\frac{1}{2}\pm \frac{i}{2}$ and $a=\frac{1}{2}$ in Theorem\,\ref{TheoremIndependentOfZeta}(1), eliminating $\LambertL_{-e^{-\pi}}(-4k-1)$ using Lemma\,\ref{LemmaLambert p=p} at $q=e^{-\pi}$ and $p=2$, and eliminating the sum $\LambertL_{i e^{-\frac{\pi}{2}}}(-4k-1)+\LambertL_{-i e^{-\frac{\pi}{2}}}(-4k-1)$ using Lemma\,\ref{LemmaLambert p=4} we get

\begin{equation*}
\begin{aligned}
\pi&=72 \, \LambertL_{e^{-\pi}}(-1)-96 \, \LambertL_{e^{-2\pi}}(-1)+24 \, \LambertL_{e^{-4\pi}}(-1),\\
\pi^5&=7056 \,\LambertL_{e^{-\pi}}(-5)-6993 \, \LambertL_{e^{-2\pi}}(-5)-63 \, \LambertL_{e^{-4\pi}}(-5),\\
\pi^9&= \frac{28226880}{41}\LambertL_{e^{-\pi}}(-9)-\frac{112920885}{164} \LambertL_{e^{-2\pi}}(-9)+\frac{13365}{164} \LambertL_{e^{-4\pi}}(-9).\\
\end{aligned}
\end{equation*}

\end{example}

\begin{example}\label{ExamplePi^(4k-1)}
By choosing $t=1$ and $a=\frac{1}{2}$ in Theorem\,\ref{TheoremIndependentOfZeta}(2) we get

\begin{equation*}
\begin{aligned}
\pi^3&=720 \, \LambertL_{e^{-\pi}}(-3)-900  \, \LambertL_{e^{-2\pi}}(-3)+180  \, \LambertL_{e^{-4\pi}}(-3),\\
\pi^7&=\frac{907200}{13} \LambertL_{e^{-\pi}}(-7)-70875 \,\LambertL_{e^{-2\pi}}(-7)+\frac{14175}{13}\LambertL_{e^{-4\pi}}(-7),\\
\pi^{11}\!\!&= \!\frac{27243216000}{4009}\LambertL_{e^{-\pi}}\!(-11)-\!\frac{218158565625}{32072} \LambertL_{e^{-2\pi}}\!(-11)\!+\!\frac{212837625}{32072} \LambertL_{e^{-4\pi}}\!(-11).\\
\end{aligned}
\end{equation*}

\end{example}

\begin{remark}
The results for the odd powers of $\pi$ in the two examples above agree with those listed by Plouffe\,\cite{Plouffe2}.
\end{remark}

\begin{proposition}
The results of Theorems \ref{TheoremZeta4k+1p=3} and \ref{TheoremZeta4k+1p=5} can be combined to produce faster converging series for $\pi^{4k+1}$ (than in the example above) in terms of the Lambert series at $q=-e^{-3 \pi },\, e^{-4\pi }, -e^{-5 \pi },\, e^{-6 \pi }$, and $e^{-10 \pi }$. For example,
\begin{equation*}
\begin{aligned}
\pi^5&=-3686634 \, \LambertL_{-e^{-3 \pi }}(-5)+2463048 \, \LambertL_{e^{-4 \pi }}(-5)+402570 \, \LambertL_{-e^{-5 \pi }}(-5)\\
&\hspace{0.15in}+\frac{1843317}{2} \LambertL_{e^{-6 \pi }}(-5)-\frac{201285}{2} \LambertL_{e^{-10 \pi }}(-5).\\
\end{aligned}
\end{equation*}
\end{proposition}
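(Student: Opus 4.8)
The plan is to exploit the fact that Theorems \ref{TheoremZeta4k+1p=3} and \ref{TheoremZeta4k+1p=5} both express the \emph{same} quantity $\zeta(4k+1)$, each as a rational multiple of $(2\pi)^{4k+1}$ plus a linear combination of Lambert series. Writing Theorem \ref{TheoremZeta4k+1p=3} schematically as $\zeta(4k+1)=P_3\,(2\pi)^{4k+1}+\alpha_3\,\LambertL_{-e^{-3\pi}}(-4k-1)+\beta_3\,\LambertL_{e^{-4\pi}}(-4k-1)+\gamma_3\,\LambertL_{e^{-6\pi}}(-4k-1)$, and Theorem \ref{TheoremZeta4k+1p=5} as $\zeta(4k+1)=P_5\,(2\pi)^{4k+1}+\alpha_5\,\LambertL_{e^{-4\pi}}(-4k-1)+\beta_5\,\LambertL_{-e^{-5\pi}}(-4k-1)+\gamma_5\,\LambertL_{e^{-10\pi}}(-4k-1)$, where $P_3,P_5$ abbreviate the two Bernoulli-number sums, I would subtract one identity from the other. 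The common term $\zeta(4k+1)$ cancels, leaving $0=(P_3-P_5)(2\pi)^{4k+1}+\big[\text{linear combination of the five Lambert series}\big]$, an equation no longer referencing the zeta function.

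Solving this relation for $(2\pi)^{4k+1}$ then isolates $\pi^{4k+1}$. Because $P_3$ and $P_5$ are finite rational combinations of Bernoulli numbers, and the prefactors $\alpha_i,\beta_i,\gamma_i$ displayed in the two theorems are rational, every coefficient in the final formula is rational. The Lambert series surviving the subtraction are precisely those occurring in either theorem: $\LambertL_{-e^{-3\pi}}$ and $\LambertL_{e^{-6\pi}}$ from Theorem \ref{TheoremZeta4k+1p=3}, $\LambertL_{-e^{-5\pi}}$ and $\LambertL_{e^{-10\pi}}$ from Theorem \ref{TheoremZeta4k+1p=5}, and $\LambertL_{e^{-4\pi}}$, which appears in both so that its coefficient becomes $\beta_3-\alpha_5$. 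This reproduces exactly the five arguments $q=-e^{-3\pi},\,e^{-4\pi},\,-e^{-5\pi},\,e^{-6\pi},\,e^{-10\pi}$ named in the proposition.

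The displayed identity for $\pi^5$ is then the specialization $k=1$: I would substitute $k=1$ into the closed forms for $P_3,P_5$ and the Lambert coefficients of both theorems, carry out the subtraction, and divide through by $(P_5-P_3)\,2^{5}$ to obtain the stated rational coefficients.

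The main obstacle is confirming that the coefficient of $(2\pi)^{4k+1}$ does not vanish, that is, $P_3\neq P_5$; otherwise the subtraction would annihilate $\pi^{4k+1}$ together with $\zeta(4k+1)$ and leave only a trivial relation among Lambert series. Verifying $P_3\neq P_5$ in general means unwinding the two Bernoulli sums, which are assembled from genuinely different ingredients (powers of $3$ versus powers of $5$, together with the $p=2$ correction terms inherited from Theorem \ref{TheoremZeta4k+1p=2}), and checking that their difference is a nonzero rational for every $k$; this is bookkeeping rather than a conceptual difficulty. The claimed gain in convergence is then immediate: the slowest-decaying term is now $\LambertL_{-e^{-3\pi}}$, contributing at rate $e^{-3\pi}$ per term, in contrast to the $e^{-\pi}$ rate of the $\pi^{4k+1}$ series in Example \ref{ExamplePi^(4k+1)}.
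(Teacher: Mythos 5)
Your proposal is correct and is exactly the paper's (implicit) argument: the paper offers no separate proof beyond the word ``combined,'' meaning precisely your elimination of $\zeta(4k+1)$ between Theorems \ref{TheoremZeta4k+1p=3} and \ref{TheoremZeta4k+1p=5} and solving for $(2\pi)^{4k+1}$, with the nonvanishing of the resulting $\pi^{4k+1}$ coefficient checked as a finite rational computation for each $k$. Carrying out your subtraction at $k=1$ using the appendix coefficients $\bigl(\tfrac{682}{201285}-\tfrac{694}{204813}\bigr)$ indeed reproduces the stated coefficients $-3686634$, $2463048$, $402570$, $\tfrac{1843317}{2}$, $-\tfrac{201285}{2}$.
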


\begin{remark}
Combining Theorems \ref{TheoremZeta4k+1p=5} and \ref{TheoremZeta4k+1root15} gives the fastest converging, but not the cleanest, result for $\pi^{4k+1}$.
\end{remark}

Similarly,

\begin{proposition}
The results of Corollary\,\ref{CorollaryZeta(4k-1)} and Theorem\,\ref{TheoremZeta4k-1root7} can be combined to produce a faster converging series for $\pi^{4k-1}$ (than in the example above) in terms of the Lambert series at $q= e^{-2 \pi },\, e^{-\sqrt{7} \pi },\, e^{-2\sqrt{7} \pi }$, and $e^{-4\sqrt{7} \pi }$. For example,
\begin{equation*}
\begin{aligned}
\pi^3&=\frac{3960}{77-29 \sqrt{7}} \, \LambertL_{e^{-2 \pi }}(-3)+\frac{4320}{77-29 \sqrt{7}} \, \LambertL_{e^{-\sqrt{7} \pi }}(-3)\\
&\hspace{0.15in}-\frac{9360}{77-29 \sqrt{7}} \, \LambertL_{e^{-2\sqrt{7} \pi }}(-3)+\frac{1080}{77-29 \sqrt{7}} \, \LambertL_{e^{-4\sqrt{7} \pi }}(-3).
\end{aligned}
\end{equation*}
\end{proposition}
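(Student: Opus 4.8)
The plan is to eliminate the common quantity $\zeta(4k-1)$ between Corollary\,\ref{CorollaryZeta(4k-1)} and Theorem\,\ref{TheoremZeta4k-1root7}, leaving $\pi^{4k-1}$ expressed purely through Lambert series. Both statements have the shape $\zeta(4k-1) = \kappa_k\,\pi^{4k-1} + \Lambda_k$, where $\kappa_k$ is a constant multiple of the Bernoulli double sum and $\Lambda_k$ is a linear combination of Lambert series. Writing $\zeta(4k-1) = \kappa_k^{\mathrm C}\,\pi^{4k-1} + \Lambda_k^{\mathrm C}$ for the Corollary, with $\kappa_k^{\mathrm C} = 2^{4k-1}\sum_{j=0}^k \frac{(-1)^{j+1}B_{2j}B_{4k-2j}}{(2j)!\,(4k-2j)!\,(1+\delta_{jk})}$ and $\Lambda_k^{\mathrm C} = -2\LambertL_{e^{-2\pi}}(-4k+1)$, and $\zeta(4k-1) = \kappa_k^{\mathrm T}\,\pi^{4k-1} + \Lambda_k^{\mathrm T}$ for the Theorem, subtracting the two gives
\[
0 = (\kappa_k^{\mathrm C}-\kappa_k^{\mathrm T})\,\pi^{4k-1} + \Lambda_k^{\mathrm C} - \Lambda_k^{\mathrm T},
\]
so that, provided $\kappa_k^{\mathrm C}\neq\kappa_k^{\mathrm T}$,
\[
\pi^{4k-1} = \frac{\Lambda_k^{\mathrm T}-\Lambda_k^{\mathrm C}}{\kappa_k^{\mathrm C}-\kappa_k^{\mathrm T}}.
\]
The right side is a combination of the four Lambert series at $q=e^{-2\pi},e^{-\sqrt 7\pi},e^{-2\sqrt 7\pi},e^{-4\sqrt 7\pi}$, all evaluated at $-4k+1$; its slowest-decaying term is $e^{-2\pi}$, which is faster than the $e^{-\pi}$ term governing Example\,\ref{ExamplePi^(4k-1)}.

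For the division to be legitimate I would first check $\kappa_k^{\mathrm C}\neq\kappa_k^{\mathrm T}$. Observe that $\kappa_k^{\mathrm C}$ is rational, whereas $\kappa_k^{\mathrm T}$ lies in $\mathbb{Q}(\sqrt 7)$: every $c_{jk}$ in Theorem\,\ref{TheoremZeta4k-1root7} is built from cosines of integer multiples of $\theta=\cot^{-1}\sqrt 7$, and the single relation $e^{i\theta}=(\sqrt 7+i)/(2\sqrt 2)$ converts each $\cos(m\theta)$ and $\sin(m\theta)$ into a rational combination of $1$ and $\sqrt 7$ (the $\sqrt 2$ denominators arising for odd $m$ cancel against the half-integer powers $2^{\,2k+\frac12-j}$ and $2^{\,j+\frac12}$ in $c_{jk}$). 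Hence $\kappa_k^{\mathrm T}=p_k+q_k\sqrt 7$ with $p_k,q_k\in\mathbb{Q}$, and since $\sqrt 7\notin\mathbb{Q}$ the equality $\kappa_k^{\mathrm C}=\kappa_k^{\mathrm T}$ would force $q_k=0$. Thus the only obstruction to solving for $\pi^{4k-1}$ is the vanishing of the $\sqrt 7$-component $q_k$, which I expect to fail only in degenerate cases (if at all); this nonvanishing is the one genuine point requiring justification in the general claim.

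For the stated example I would specialize to $k=1$. Corollary\,\ref{CorollaryZeta(4k-1)} immediately gives $\kappa_1^{\mathrm C}=2^3\!\left(\tfrac{1}{720}+\tfrac{1}{288}\right)=\tfrac{7}{180}$, reproducing the classical $\zeta(3)=\tfrac{7\pi^3}{180}-2\LambertL_{e^{-2\pi}}(-3)$. For Theorem\,\ref{TheoremZeta4k-1root7} I would evaluate the $\theta$-dependent quantities via de Moivre, obtaining $\cos 2\theta=\tfrac34$, $\cos 4\theta=\tfrac18$, $\sin 5\theta=\tfrac{11}{8\sqrt 2}$ and the like, which reduce the coefficients to $a_1=\tfrac{11}{2}$, $b_1=12$, $c_{01}=\tfrac{9\sqrt 7}{22}$, $c_{11}=\tfrac{2\sqrt 7}{11}$, and hence $\kappa_1^{\mathrm T}=\tfrac{29\sqrt 7}{1980}$ together with the three $\sqrt 7$-Lambert coefficients $b_1/a_1=\tfrac{24}{11}$, $-\tfrac{(9/4)b_1-1}{a_1}=-\tfrac{52}{11}$, and $\tfrac{b_1}{4a_1}=\tfrac{6}{11}$. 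Substituting into the boxed formula with $\kappa_1^{\mathrm C}-\kappa_1^{\mathrm T}=\tfrac{77-29\sqrt 7}{1980}$ then produces precisely the displayed coefficients $\tfrac{3960}{77-29\sqrt 7}$, $\tfrac{4320}{77-29\sqrt 7}$, $-\tfrac{9360}{77-29\sqrt 7}$, and $\tfrac{1080}{77-29\sqrt 7}$.

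The main obstacle is thus the bookkeeping of the trigonometric-to-algebraic reduction rather than any structural difficulty: once the uniform substitution $e^{i\theta}=(\sqrt 7+i)/(2\sqrt 2)$ is applied, all the $\theta$-dependent constants collapse into $\mathbb{Q}(\sqrt 7)$ and the remaining work is routine rational arithmetic. The only conceptual step that merits care is confirming $\kappa_k^{\mathrm C}\neq\kappa_k^{\mathrm T}$, i.e.\ that the $\sqrt 7$-part of $\kappa_k^{\mathrm T}$ does not vanish; the $k=1$ computation exhibits $q_1=\tfrac{29}{1980}\neq0$, and the same de Moivre expansion would let one track $q_k$ in general.
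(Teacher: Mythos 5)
Your proposal is correct and is exactly the paper's (implicit) argument: the proposition follows by eliminating $\zeta(4k-1)$ between Corollary\,\ref{CorollaryZeta(4k-1)} and Theorem\,\ref{TheoremZeta4k-1root7}, and your $k=1$ evaluations ($\kappa_1^{\mathrm C}=\tfrac{7}{180}$, $\kappa_1^{\mathrm T}=\tfrac{29\sqrt 7}{1980}$, Lambert coefficients $\tfrac{24}{11},-\tfrac{52}{11},\tfrac{6}{11}$) reproduce the displayed coefficients precisely. The one gap you flag (nonvanishing of $\kappa_k^{\mathrm C}-\kappa_k^{\mathrm T}$ for general $k$) is actually closed by your own $\mathbb{Q}(\sqrt 7)$ observation pushed one step further: every $c_{jk}$ is a \emph{pure} rational multiple of $\sqrt 7$ (both cosine arguments in $c_{jk}$ are odd multiples of $\theta$, so the rational part $p_k$ vanishes), while $\kappa_k^{\mathrm C}$ is a strictly positive rational because Corollary\,\ref{CorollaryZeta(4k-1)} gives $\kappa_k^{\mathrm C}\pi^{4k-1}=\zeta(4k-1)+2\LambertL_{e^{-2\pi}}(-4k+1)>0$, so equality of the two would force both to be zero, which is impossible.
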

\begin{remark}
Combining the results of Theorems \ref{TheoremZeta4k-1root7} and \ref{TheoremZeta4k-1root15} gives the fastest converging result for $\pi^{4k-1}$.
\end{remark}

\begin{proposition}
By using the methods outlined in Section\,\ref{Sectionzeta(4k+1)} but with $s=-1$ we can calculate the logarithm of the first three primes in terms of the Lambert series\emph{:}
\begin{equation*}
\begin{aligned}
\log{2}&=\frac{2 \pi }{9}-\frac{8}{3}\LambertL_{e^{-2 \pi }}(-1)+\frac{8}{3}\LambertL_{e^{-4 \pi }}(-1),\\
\log{3}&=\frac{19 \pi }{54}-\frac{32}{9}\LambertL_{e^{-2 \pi }}(-1)+\frac{4}{3}\LambertL_{-e^{-3 \pi }}(-1)+\frac{8}{9}\LambertL_{e^{-4 \pi }}(-1)+\frac{16}{3}\LambertL_{e^{-6 \pi }}(-1),\\
\log{5}&=\frac{37 \pi }{72}-\frac{8}{3}\LambertL_{e^{-2 \pi }}(-1)+\frac{2}{3}\LambertL_{e^{-4 \pi }}(-1)+\LambertL_{-e^{-5 \pi }}(-1)+ \LambertL_{e^{-10 \pi }}(-1).\\
\end{aligned}
\end{equation*}
\end{proposition}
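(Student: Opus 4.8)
The plan is to treat each logarithm as the $k=0$ (equivalently $s=-1$) specialization of the constructions in Section~\ref{Sectionzeta(4k+1)}, with the one essential change that Theorem~\ref{TheoremOddLambert}(3) must be replaced by Theorem~\ref{TheoremOddLambert}(1). The reason is structural: in the general $s=-(4k+1)$ identity the term $\zeta(4k+1)\sinh(2k\log t)$ carries the unknown we solve for, but at $k=0$ this term vanishes identically and its role is taken over by the $\tfrac12\log t$ term appearing in part~(1). Thus the very elimination scheme that isolates $\zeta(4k+1)$ in Theorems~\ref{TheoremZeta4k+1p=2}--\ref{TheoremZeta4k+1p=5} will now isolate a combination of logarithms instead. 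Throughout it is convenient to write $\sinh(\log t)=\tfrac12(t-1/t)$, so that every constant produced by $\tfrac12\log t-\tfrac{\pi}{6}\sinh(\log t)$ is elementary.

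For $\log 2$ I would mirror the proof of Theorem~\ref{TheoremZeta4k+1p=2}: evaluate Theorem~\ref{TheoremOddLambert}(1) at $t=\tfrac12$ and at $t=1/(1+i)$, and adjoin Lemma~\ref{LemmaLambert p=p} at $p=2$, $q=e^{-2\pi}$. These three relations involve only $\LambertL_{e^{-\pi}}(-1)$, $\LambertL_{-e^{-\pi}}(-1)$, $\LambertL_{e^{-2\pi}}(-1)$ and $\LambertL_{e^{-4\pi}}(-1)$ together with explicit constants, since $t=1/(1+i)$ sends $e^{-2\pi t}\mapsto -e^{-\pi}$ and $e^{-2\pi/t}\mapsto e^{-2\pi}$. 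Eliminating the two series at $q=\pm e^{-\pi}$ then leaves $\log 2$ expressed through $\LambertL_{e^{-2\pi}}(-1)$ and $\LambertL_{e^{-4\pi}}(-1)$. The crucial bookkeeping point is $\log(1/(1+i))=-\tfrac12\log 2-i\tfrac{\pi}{4}$, whose real part feeds the $\log 2$ coefficient while its imaginary part, together with the $\sinh$ contributions, assembles the rational multiple $\tfrac{2\pi}{9}$ of $\pi$.

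For $\log 3$ and $\log 5$ I would carry out the analogous generalizations of Theorems~\ref{TheoremZeta4k+1p=3} and~\ref{TheoremZeta4k+1p=5}, evaluating Theorem~\ref{TheoremOddLambert}(1) at $t=\tfrac13,\ \tfrac13\pm\tfrac{i}{3}$ (respectively $t=\tfrac15,\ \tfrac15\pm\tfrac{i}{5},\ \tfrac15\pm\tfrac{2i}{5}$), adjoining Lemma~\ref{LemmaLambert p=p} at the corresponding prime, and then using the already-established $\log 2$ relation to remove the residual $\log 2$ that necessarily appears. That residual arises because the complex nodes have moduli carrying a factor $\sqrt2$ (and, for $p=5$, also $\sqrt5$), so that $\log|t|$ contributes a spurious $\tfrac12\log 2$ alongside the target $\log p$; substituting the $\log 2$ formula is exactly what converts those constants back into the $\LambertL_{e^{-2\pi}}(-1)$ and $\LambertL_{e^{-4\pi}}(-1)$ terms seen in the final identities. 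This also accounts for why the $\log 3$ and $\log 5$ results carry series at $e^{-2\pi}$ and $e^{-4\pi}$ in addition to the prime-specific nodes $-e^{-3\pi},e^{-6\pi}$ and $-e^{-5\pi},e^{-10\pi}$.

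The main obstacle is one of careful accounting rather than of any new idea: for each complex node one must split $\log t=\log|t|+i\arg t$, route $\log|t|$ into the $\log p$ and $\log 2$ channels and $\arg t$ into the $\pi$ channel, and then solve the resulting linear system to verify that the constants collapse to the stated rationals $\tfrac{2\pi}{9}$, $\tfrac{19\pi}{54}$, $\tfrac{37\pi}{72}$ and to the stated Lambert coefficients. A concluding numerical comparison with Plouffe's tabulated values would confirm that no stray constant survives the eliminations.
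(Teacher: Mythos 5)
Your proposal is correct, and it is precisely the paper's intended argument: the paper offers no proof beyond the phrase ``the methods of Section~\ref{Sectionzeta(4k+1)} with $s=-1$,'' and your instantiation --- Theorem~\ref{TheoremOddLambert}(1) at $t=\tfrac12$ and $t=1/(1+i)$ combined with Lemma~\ref{LemmaLambert p=p} ($p=2$, $q=e^{-2\pi}$) for $\log 2$, then the $p=3$ and $p=5$ analogues of Theorems~\ref{TheoremZeta4k+1p=3} and \ref{TheoremZeta4k+1p=5} with the $\log 2$ identity recycled to absorb the residual $\tfrac12\log 2$ contributed by the complex nodes --- is exactly the right one, including your structural observation that part~(1) of Theorem~\ref{TheoremOddLambert} must replace part~(3) because the term $\zeta(4k+1)\sinh(2k\log t)$ degenerates at $k=0$ and the $\tfrac12\log t$ term takes over its role.

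One finding your derivation will surface: carried out faithfully, the $\log 3$ elimination yields the coefficient $\tfrac43$ on $\LambertL_{e^{-6\pi}}(-1)$, not the $\tfrac{16}{3}$ printed in the proposition. Numerics side with $\tfrac43$: with $\tfrac{16}{3}$ the stated right-hand side misses $\log 3$ by $4\,\LambertL_{e^{-6\pi}}(-1)\approx 2.6\times 10^{-8}$, whereas with $\tfrac43$ it agrees to full working precision (the $\log 2$ and $\log 5$ formulae check out as printed, by the same kind of verification). So if your final answer for $\log 3$ disagrees with the statement, the discrepancy is a typo in the proposition, not an error in your method.
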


\section*{Acknowledgments}

We thank Van Vleet Physics Professorship and Mac Armour Physics Fellowship for their support. We thank Simon Plouffe for making the numerically computed results for $\zeta(4k\pm 1)$ available online; we found them to be extremely helpful in formulating our results. We are grateful to Jonathan Sondow and Eric Weisstein for maintaining a comprehensive web page on the Riemann zeta function.

\newpage

\appendix{Appendix}

We list examples of the first few $\zeta(4k+1)$ generated by Theorem\,\ref{TheoremZeta4k+1p=3}. For compactness only the coefficients are listed respectively for $\pi^{4k+1}$, $\LambertL_{-e^{-3 \pi }}(-4 k-1)$, $\LambertL_{e^{-4 \pi }}(-4 k-1)$, and $\LambertL_{e^{-6 \pi }}(-4 k-1)$.

\begin{equation*}
\begin{aligned}
\zeta(5)&=\left[\frac{682}{201285},-\frac{296}{355},-\frac{488}{355},\frac{74}{355}\right]\\
\zeta(9)&=\left[\frac{5048}{150155775},\frac{2272}{1605},-\frac{5624}{1605},\frac{142}{1605}\right]\\
\zeta(13)&=\left[\frac{21462388}{62314387009875},-\frac{1056896}{2114515},-\frac{3188648}{2114515},\frac{16514}{2114515}\right]\\
\zeta (17)&=\left[\frac{12292037116}{3476479836810605625},\frac{66978304}{95520195},-\frac{258280328}{95520195},\frac{261634}{95520195}\right]\\
\zeta (21)&=\left[\frac{203055579851796692}{5594631411704844933908859375},-\frac{4297066496}{12606788275},-\frac{20920706408}{12606788275},\right.\\
&\left.\hspace{3.77in}\frac{4196354}{12606788275}\right]\\
\zeta (25)&=\left[\frac{91295430825021344}{245007095801727658882798940625},\frac{274844360704}{709832878755},-\frac{1694577218888}{709832878755},\right.\\
&\left.\hspace{3.69in}\frac{67100674}{709832878755}\right]\\
\end{aligned}
\end{equation*}

Examples of the first few $\zeta(4k+1)$ generated by Theorem\,\ref{TheoremZeta4k+1p=5}. The coefficients are listed respectively for $\pi^{4k+1}$, $\LambertL_{e^{-4 \pi }}(-4 k-1)$, $\LambertL_{-e^{-5 \pi }}\!(-4 k-1)$, and $\LambertL_{e^{-10 \pi }}\!(-4 k-1)$. The result for $\zeta(5)$ matches that of Plouffe\,\cite{Plouffe2} when converted to positive $q$ values.

\begin{equation*}
\begin{aligned}
\zeta(5)&=\left[\frac{694}{204813},-\frac{6280}{3251},-\frac{296}{3251},\frac{74}{3251}\right]\\
\zeta(9)&=\left[\frac{6118928}{182032863705},-\frac{3908360}{1945731},\frac{15904}{1945731},\frac{994}{1945731}\right]\\
\zeta(13)&=\left[\frac{4131911428}{11996181573401025},-\frac{2441359240}{1221199811},-\frac{1056896}{1221199811},\frac{16514}{1221199811}\right]\\
\zeta (17)&=\left[\frac{687182059214356}{194362869568557017703375},-\frac{1525878246920}{762905503491},\frac{66978304}{762905503491},\right.\\
&\hspace{3.65in}\left.\frac{261634}{762905503491}\right]\\
\zeta (21)&=\left[\frac{2560199089127112465412}{70537137132904905751999929343125},-\frac{953674355019400}{476839323944771},\right.\\
&\hspace{2.195in}\left.-\frac{4297066496}{476839323944771},\frac{4196354}{476839323944771}\right]\\
\zeta (25)&=\left[\frac{114987316346581920808496}{308598430935986470640664020644801875},-\frac{596046447625404680}{298023086356971651},\right.\\
&\left.\hspace{1.895in}\frac{274844360704}{298023086356971651},\frac{67100674}{298023086356971651}\right]\\
\end{aligned}
\end{equation*}

Examples of the first few $\zeta(4k+1)$ generated by Theorem\,\ref{TheoremZeta4k+1root7}. The coefficients are listed respectively for $\pi^{4k+1}$, $\LambertL_{e^{-\sqrt{7} \pi }}(-4 k-1)$, $\LambertL_{e^{-2\sqrt{7} \pi }}(-4 k-1)$, $\LambertL_{e^{-4\sqrt{7} \pi }}(-4 k-1)$.

\begin{equation*}
\begin{aligned}
\zeta(5)&=\left[\frac{5}{558 \sqrt{7}},\frac{64}{31},-\frac{130}{31},\frac{4}{31}\right]\\
\zeta(9)&=\left[\frac{6451}{72571950 \sqrt{7}},\frac{1088}{543},-\frac{8713}{2172},\frac{17}{2172}\right]\\
\zeta(13)&=\left[\frac{684521}{751529653875 \sqrt{7}},\frac{16480}{8239},-\frac{4219139}{1054592},\frac{515}{1054592}\right]\\
\zeta (17)&=\left[\frac{7556214529 }{808189287857201250 \sqrt{7}},\frac{261248}{130623},-\frac{267518969}{66878976},\frac{2041}{66878976}\right]\\
\zeta (21)&=\left[\frac{11042228011 }{115045098786113871375 \sqrt{7}},\frac{4191904}{2095951},-\frac{274720686005}{68680122368},\frac{130997}{68680122368}\right]\\
\zeta (25)&=\left[\frac{93518263081637}{94909028455546692340078125 \sqrt{7}},\frac{67120832}{33560415},-\frac{35190647292091}{8797661429760},\right.\\
&\hspace{3.42in}\left.\frac{1048763}{8797661429760}\right]\\
\end{aligned}
\end{equation*}

Examples of the first few $\zeta(4k+1)$ generated by Theorem\,\ref{TheoremZeta4k+1root15}. The coefficients are listed respectively for $\pi^{4k+1}$\!, $\mathcal{S}_{e^{-\sqrt{15} \pi }}\!(-4 k-1)$, $\LambertL_{e^{-\sqrt{15} \pi }}\!(-4 k-1)$, $\LambertL_{e^{-2\sqrt{15} \pi }}\!(-4 k-1)$, $\LambertL_{e^{-4\sqrt{15} \pi }}\!(-4 k-1)$.

\begin{equation*}
\begin{aligned}
\zeta(5)&=\left[\frac{5}{378\sqrt{15}},\frac{7}{\sqrt{15}},\frac{33}{16},-\frac{1073}{256} ,\frac{33}{256} \right]\\
\zeta(9)&=\left[\frac{19}{145530 \sqrt{15}},\frac{17}{7 \sqrt{15}},\frac{513}{256},-\frac{262913}{65536},\frac{513}{65536}\right]\\
\zeta(13)&=\left[\frac{5623}{4214184975 \sqrt{15}},\frac{7}{33 \sqrt{15}},\frac{8193}{4096},-\frac{67121153}{16777216},\frac{8193}{16777216}\right]\\
\zeta (17)&=\left[\frac{152161}{11136941565750 \sqrt{15}},-\frac{223}{119 \sqrt{15}},\frac{131073}{65536},-\frac{17180065793}{4294967296},\frac{131073}{4294967296}\right]\\
\zeta (21)&=\left[\frac{2100413011}{15039186678619228125 \sqrt{15}},-\frac{1673}{305 \sqrt{15}},\frac{2097153}{1048576},-\frac{4398049656833}{1099511627776},\right.\\
&\hspace{3.4535in}\left.\frac{2097153}{1099511627776}\right]\\
\zeta (25)&=\left[\frac{368670553}{266533834992158608875 \sqrt{15}},-\frac{8143}{231 \sqrt{15}},\frac{33554433}{16777216},\right.\\
&\hspace{2.00in}\left.-\frac{1125899957174273}{281474976710656},\frac{33554433}{281474976710656}\right]\\
\zeta (29)&=\left[\frac{276635171660523838}{18471447539635216765490460984375 \sqrt{15}},\frac{30233}{3263 \sqrt{15}},\frac{536870913}{268435456},\right.\\
&\hspace{1.715in}\left.-\frac{288230376957018113}{72057594037927936},\frac{536870913}{72057594037927936}\right]\\
\end{aligned}
\end{equation*}

Examples of the first few $\zeta(4k-1)$ generated by Theorem\,\ref{TheoremZeta4k-1root7}. The coefficients are listed respectively for $\pi^{4k-1}$, $\LambertL_{e^{-\sqrt{7} \pi }}(-4 k+1)$, $\LambertL_{e^{-2\sqrt{7} \pi }}(-4 k+1)$, $\LambertL_{e^{-4\sqrt{7} \pi }}(-4 k+1)$.

\begin{equation*}
\begin{aligned}
\zeta(3)&=\left[\frac{29 \sqrt{7} }{1980},\frac{24}{11},-\frac{52}{11},\frac{6}{11}\right]\\
\zeta(7)&=\left[\frac{851 }{963900 \sqrt{7}},\frac{240}{119},-\frac{1927}{476},\frac{15}{476}\right]\\
\zeta(11)&=\left[\frac{98983}{11006745750 \sqrt{7}},\frac{3984}{1991},-\frac{510073}{127424},\frac{249}{127424}\right]\\
\zeta (15)&=\left[\frac{120891949 }{1310075958262500 \sqrt{7}},\frac{65712}{32855},-\frac{26916047}{6728704},\frac{4107}{33643520}\right]\\
\zeta (19)&=\left[\frac{304799492533 }{321754984333646613750 \sqrt{7}},\frac{1050576}{525287},-\frac{34425307261}{8606302208},\frac{65661}{8606302208}\right]\\
\zeta (23)&=\left[\frac{3069248396337203}{315604617827322095616093750 \sqrt{7}},\frac{16776432}{8388215},-\frac{1759136500931}{439784046592},\right.\\
&\hspace{3.271in}\left.\frac{1048527}{2198920232960}\right]\\
\end{aligned}
\end{equation*}

Examples of the first few $\zeta(4k-1)$ generated by Theorem\,\ref{TheoremZeta4k-1root15}. The coefficients are listed respectively for $\pi^{4k-1}$\!, $\mathcal{S}_{e^{-\sqrt{15} \pi }}\!(-4 k+1)$, $\LambertL_{e^{-\sqrt{15} \pi }}\!(-4 k+1)$, $\LambertL_{e^{-2\sqrt{15} \pi }}\!(-4 k+1)$, $\LambertL_{e^{-4\sqrt{15} \pi }}\!(-4 k+1)$.

\begin{equation*}
\begin{aligned}
\zeta(3)&=\left[\frac{\sqrt{15}}{100},-\frac{1}{\sqrt{15}},\frac{9}{4},-\frac{77}{16},\frac{9}{16}\right]\\
\zeta(7)&=\left[\frac{73}{56700 \sqrt{15}},-\frac{11}{3 \sqrt{15}},\frac{129}{64},-\frac{16577}{4096},\frac{129}{4096}\right]\\
\zeta(11)&=\left[\frac{82889}{6385128750 \sqrt{15}},-\frac{61}{5 \sqrt{15}},\frac{2049}{1024},-\frac{4197377}{1048576},\frac{2049}{1048576}\right]\\
\zeta (15)&=\left[\frac{3103}{17239847625 \sqrt{15}},-\frac{11}{3 \sqrt{15}},\frac{32769}{16384},-\frac{1073790977}{268435456},\frac{32769}{268435456}\right]\\
\zeta (19)&=\left[\frac{269130227}{192947512626618750 \sqrt{15}},\frac{781}{171 \sqrt{15}},\frac{524289}{262144},-\frac{274878693377}{68719476736},\right.\\
&\hspace{3.69in}\left.\frac{524289}{68719476736}\right]\\
\zeta (23)&=\left[\frac{247753871371}{17365083188883060881250 \sqrt{15}},\frac{1451}{989 \sqrt{15}},\frac{8388609}{4194304},-\frac{70368756760577}{17592186044416},\right.\\
&\hspace{3.476in}\left.\frac{8388609}{17592186044416}\right]\\
\end{aligned}
\end{equation*}

\end{document}